\theoremstyle{plain}
\newtheorem{theorem}{Theorem}[section]
\newtheorem{lemma}[theorem]{Lemma}
\newtheorem{corollary}[theorem]{Corollary}
\theoremstyle{definition}
\newtheorem{remark}[theorem]{Remark}
\numberwithin{equation}{section}
\def\be{\begin{equation}}
\def\ee{\end{equation}}
\begin{document}

\title[Minimal Graphs in the Hyperbolic Space]
{Minimal Graphs in the Hyperbolic Space\\ with Singular Asymptotic Boundaries}

\author{Qing Han}
\address{Beijing International Center for Mathematical Research\\
Peking University\\
Beijing, 100871, China} \email{qhan@math.pku.edu.cn}
\address{Department of Mathematics\\
University of Notre Dame\\
Notre Dame, IN 46556, USA} \email{qhan@nd.edu}
\author{Weiming Shen}
\address{School of Mathematical Sciences\\
Peking University\\
Beijing, 100871, China}  \email{wmshen@pku.edu.cn}
\author{Yue Wang}
\address{School of Mathematical Sciences\\
Peking University\\
Beijing, 100871, China}  \email{1201110027@pku.edu.cn}

\begin{abstract}
We study asymptotic behaviors of solutions $f$ to the Dirichlet problem
for minimal graphs in the hyperbolic space with singular asymptotic 
boundaries under the assumption that the boundaries are piecewise regular 
with positive curvatures. We derive an estimate of such solutions by the corresponding solutions 
in the intersections of interior tangent balls. The positivity of curvatures plays an important role. 
\end{abstract}

\thanks{The first author acknowledges the support of NSF
Grant DMS-1404596. The second author acknowledges the support of the 
graduate school of Peking University. 
The third author acknowledges the support of China Scholarship Council.}
\maketitle
\section{Introduction}

Assume that $\Omega\subset \mathbb{R}^{n}$ is a bounded domain.
Lin \cite{Lin1989Invent} studied the Dirichlet problem of the form
\begin{align}\label{eq-MinGmain}
\begin{split}
\Delta f-\frac{f_{i}f_{j}}{1+|\nabla f|^{2}}f_{ij}+\frac{n}{f}&=0 \quad\text{in }\Omega,\\
f&=0 \quad \text{on }\partial\Omega,\\
f&>0 \quad\text{in }  \Omega.
\end{split}
\end{align}
Geometrically, the graph of $f$ is a minimal surface in $\mathbb H^{n+1}$ with its asymptotic boundary at
infinity given by  $\partial \Omega$.
We note that the
equation in \eqref{eq-MinGmain} is a quasilinear non-uniformly elliptic
equation. It becomes singular on $\partial\Omega$ since $f = 0$ there.
Lin \cite{Lin1989Invent} proved that
\eqref{eq-MinGmain} admits a unique solution
$f\in C(\bar{\Omega})\bigcap C^{\infty}(\Omega)$
if
$\Omega\subset \mathbb{R}^{n}$
is a $C^2$-domain with a nonnegative boundary mean curvature $H_{\partial\Omega} \geq 0$
with respect to the inward
normal direction of $\partial\Omega$.
Concerning the higher global regularity, Lin proved
$f\in C^{1/2}(\bar{\Omega})$
if $H_{\partial\Omega} > 0$. 
In \cite{HanShenWang}, we proved that under the condition $H_{\partial\Omega} \geq 0$,
$f\in  C^{\frac{1}{n+1}}(\bar{\Omega})$
and
$$[f]_{C^{\frac{1}{n+1}}(\bar{\Omega})} \leq [(n+1) \operatorname{diameter}(\Omega)^{n}]^{\frac{1}{n+1}}.$$
This estimate does not depend on the regularity
of the domain, which allows us to discuss \eqref{eq-MinGmain} in domains with singularity.
In \cite{HanShenWang}, we also proved that \eqref{eq-MinGmain} admits a unique solution
$f\in C^{1/2}(\bar{\Omega})\bigcap C^{\infty}(\Omega)$
if $\Omega$ is a bounded domain
which is the intersection of finitely many bounded convex $C^{2}$-domains $\Omega_i$
with $H_{\partial\Omega_i} > 0$.

Concerning asymptotic behaviors of solutions $f$ of \eqref{eq-MinGmain}, we have the following result.  
Let $\Omega$ be a bounded $C^{2,\alpha}$-domain with $H_{\partial\Omega}>0$, 
for some $\alpha\in (0,1)$. 
Then, 
\begin{equation}\label{eq-C^2BasicExpansion}
\bigg|\left(\frac{H_{\partial\Omega}}{2d}\right)^{\frac12}f-1\bigg|\le Cd^{\frac{\alpha}2},\end{equation}
where $d$ is the distance function to $\partial\Omega$. 
We need to mention that the estimate \eqref{eq-C^2BasicExpansion} is sharp 
under the present regularity assumption. 
If $\partial\Omega$ has a higher 
regularity, we can expand more. For details, refer to \cite{HanJiang2014}. 

In this paper, we study asymptotic behaviors of solutions $f$ 
of \eqref{eq-MinGmain} near boundaries with singularity. 

There have been only a few results concerning the boundary behaviors of solutions of 
geometric PDEs in singular domains. This is partly due to 
the diversity of singularity and complexity of the relevant geometric problems.
The first two authors studied the asymptotic behaviors of solutions of the Liouville
equation in \cite{HanShen1} and solutions of the Loewner-Nirenberg problem in \cite{HanShen2}
in singular domains and proved that the solutions are well approximated by the
corresponding solutions in tangent cones at singular points on the boundary. 

Asymptotic behaviors of solutions of \eqref{eq-MinGmain} are more complicated than 
those of solutions of the Liouville equation and solutions of the Loewner-Nirenberg problem. 
As the estimate \eqref{eq-C^2BasicExpansion} illustrates, the positivity of the boundary 
mean curvature plays an important role in the estimates of solutions near $C^{2,\alpha}$-boundary. 
When we attempt to generalize \eqref{eq-C^2BasicExpansion}
to domains with singularity,  we cannot compare solutions $f$
of \eqref{eq-MinGmain} with the corresponding 
solutions in tangent cones if the tangent cones have zero mean curvature
wherever they are smooth. This is the case 
if the tangent cones are bounded by finitely many hyperplanes. 
We need a ``model" domain to preserve the positivity of the boundary mean curvature. 
Such a model domain is provided by the intersection of tangent balls at the singular points. 

We prove the following result for $n=2$. 

\begin{theorem}\label{main theorem2}
Let $\Omega$ be a bounded convex domain in $\mathbb R^2$
and, for $0\in\partial\Omega$ and some $R>0$, let 
$\partial\Omega\cap B_R$ consist of two $C^{2,\alpha}$-curves
$\sigma_1$ and $\sigma_2$
intersecting at the origin with an angle $\mu\pi$, for some
constants $\alpha, \mu\in (0,1)$, such that 
$\sigma_i$ has a positive curvature $\kappa_i$ at the origin, for 
$i=1,2$. Suppose  $f\in C(\bar\Omega)\cap C^\infty (\Omega)$ is the solution 
of \eqref{eq-MinGmain} and $h$ is the corresponding solution in 
$$\Omega_{\mu,\kappa_{1},\kappa_{2}}=
B_{\frac{1}{\kappa_{1}}} \left(\frac{1}{\kappa_{1}}\nu_1\right)
\bigcap B_{\frac{1}{\kappa_{2}}} \left(\frac{1}{\kappa_{2}}\nu_2\right),$$ 
where $\nu_i$ is the unit inner normal vector of $\sigma_i$ at the origin. 
Then, there exist a constant $r$ and a
$C^{2,\alpha}$-diffeomorphism $T$:
$B_r \rightarrow T(B_r)\subseteq\mathbb R^2$, with
$T(\Omega \bigcap B_r)= \Omega_{\mu,\kappa_{1},\kappa_{2}}\bigcap T(B_r)$ and
$T(\partial\Omega \bigcap B_r)= \partial \Omega_{\mu,\kappa_{1},\kappa_{2}}\bigcap T(B_r)$, 
such that,
for any $x\in B_{r}$,
\begin{align}
\label{main-estimate2} \left|\frac{f(x)}{h(Tx)}-1\right|\leq C|x|^{\beta},
\end{align}
where $\beta$ is a constant in $(0,\alpha/2]$ and $C$ is a positive 
constant depending only on $R$, $\alpha$, $\mu$, 
and the $C^{2,\alpha}$-norms of $\sigma_1$ and $\sigma_2$ in $B_R$. 
\end{theorem}

In the proof of Theorem \ref{main theorem2}, we will construct the map $T$, which is determined
by the distances to $\sigma_i$. We note that \eqref{main-estimate2} 
generalizes \eqref{eq-C^2BasicExpansion} to singular boundaries. 
We point out that if $\alpha$ is sufficiently small, we can take 
$\beta=\alpha/2$, which is optimal. 

We now describe briefly the proof of Theorem \ref{main theorem2}  is based on a
combination of isometric transforms and the maximum principle. Usually, when we discuss 
asymptotic behaviors of solutions $f$ in the domain $\Omega$ with a singularity at $x_0$, 
we compare such solutions 
with the corresponding solutions in tangent cones at $x_0$. However, the positivity of curvatures 
is not preserved for tangent cones. Instead, we use the solution
$f_{\mu,\kappa_{1},\kappa_{2}}$ in $\Omega_{\mu,\kappa_{1},\kappa_{2}}$
defined as intersections of 
tangent balls as stated in Theorem \ref{main theorem2}. 
Our goal is to compare the solution $f$ in $\Omega$ near $x_0$ 
with the solution
$f_{\mu,\kappa_{1},\kappa_{2}}$ in $\Omega_{\mu,\kappa_{1},\kappa_{2}}$. We note that 
a given point $x$ in $\Omega$ may not necessarily be a point in 
$\Omega_{\mu,\kappa_{1},\kappa_{2}}$. So as 
a part of the comparison of $f$ with $u_{V_{x_0}}$, we need to construct a map $T$, which maps 
$\Omega$ near $x_0$ onto $f_{\mu,\kappa_{1},\kappa_{2}}$ near $x_0$, and to compare $f(x)$ with 
$f_{\mu,\kappa_{1},\kappa_{2}}(Tx)$. We achieve this in two steps. 

In the first step, we construct two sets $\widetilde B$ and $\widehat B$ with the property 
$\widetilde B\subseteq\Omega\subseteq\widehat B$ near $x$. 
To construct such sets $\widetilde B$
and $\widehat B$, we first place two balls tangent to 
$\sigma_i$ at $p_i$, the closest point to $x$ on $\sigma_i$, for each $i=1, 2$. 
We can form 
$\widetilde B$ from the smaller balls and $\widehat B$ from the larger balls. 

In the second step, we compare the solution $f$ in $\Omega$ near $x_0$ with the solution 
$f_{\mu,\kappa_{1},\kappa_{2}}$ in $\Omega_{\mu,\kappa_{1},\kappa_{2}}$. 
To this end, we first compare $f$ with the solutions 
$\widetilde f$ and $\widehat f$ in $\widetilde B$ and $\widehat B$, respectively, and then compare 
$\widetilde f$ and $\widehat f$ with $f_{\mu,\kappa_{1},\kappa_{2}}$. 
We note that the sets $\widetilde B$, $\widehat B$, and $\Omega_{\mu,\kappa_{1},\kappa_{2}}$
have the same structure; namely, they are the intersections of two balls. 
Comparisons of solutions in these sets are aided by isometric transforms in the 
hyperbolic space. 

The paper is organized as follows. In Section \ref{sec-Existence},
we prove the existence of solutions of \eqref{eq-MinGmain}
in infinite cones and prove some basic
estimates for these solutions.
In Section \ref{sec-Localization}, 
we prove that asymptotic expansions near singular boundary points up to certain orders 
are local properties. In Section \ref{sec-C-2,alpha-boundary}, we study the asymptotic
expansions near singular points with positive curvatures and prove 
Theorem \ref{main theorem2}.

\section{Solutions in Cones}\label{sec-Existence}

In this section, we discuss \eqref{eq-MinGmain}
in infinite cones and prove the existence and uniqueness of its solutions.
We also derive some basic
estimates. Throughout this section, we assume $n=2$.

For some constant $\mu<1$, define
\begin{align}\label{eq-definition-cone}
V_{\mu}&=\{(r,\theta)\mid r\in(0,\infty),\theta\in(0,\mu\pi)\}.
\end{align}
This is an infinite cone in $\mathbb R^{2}$, expressed in polar coordinates.
Our goal is to find a solution $f$ of \eqref{eq-MinGmain} in the form
$$f=rh(\theta)\quad\text{in } \Omega=V_{\mu}.$$
By a straightforward calculation,
$\eqref{eq-MinGmain}$ has the form
\begin{align}
\label{po-MainEq}
\frac{h''+h}{r}-\frac{h'^{2}(h''+h)}{r(1+h^{2}+h'^{2})}+\frac{2}{rh}&=0.
\end{align}
In view of \eqref{po-MainEq}, we set
\begin{align}
Lh&=(h''+h)(1+h^{2})h+2(1+h^{2}+h'^{2}).
\end{align}
We note that $L$ is an operator acting on functions $h=h(\theta)$ on $(0,\mu\pi)$.

First, we construct supersolutions of $L$.

\begin{lemma}\label{lemma-supersol}
For some constant $\mu\in (0,1)$,
there exist constants  $A>0$, $B\ge 0$, $\alpha\ge 2$ and $\beta\in (0,1)$
such that
\begin{align}
L\left( A(\sin\frac{\theta}{\mu})^{\frac{1}{1+\alpha}}+ B(\sin\frac{\theta}{\mu})^{\frac{1}{1+\beta}}\right)
&\leq 0\quad\text{on }(0,\mu\pi).
\end{align}
\end{lemma}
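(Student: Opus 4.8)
The plan is to verify the inequality by direct substitution of the candidate function $\varphi = A(\sin\tfrac{\theta}{\mu})^{p} + B(\sin\tfrac{\theta}{\mu})^{q}$, where $p = \tfrac{1}{1+\alpha}$ and $q = \tfrac{1}{1+\beta}$, into the operator $L$, and then to choose the four constants $A, B, \alpha, \beta$ so that the resulting expression has a sign. First I would compute $\varphi''+\varphi$ for a single power $(\sin s)^{\gamma}$ with $s = \theta/\mu$; a routine computation gives $\tfrac{d^2}{d\theta^2}(\sin s)^{\gamma} = \tfrac{1}{\mu^2}\big[\gamma(\gamma-1)(\sin s)^{\gamma-2}\cos^2 s - \gamma(\sin s)^{\gamma}\big]$, so that $(\sin s)^{\gamma}$ already carries a negative leading contribution $-\tfrac{\gamma(1-\gamma)}{\mu^2}(\sin s)^{\gamma-2}$ near the endpoints $\theta = 0, \mu\pi$ where $\sin s \to 0$. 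This is the key structural fact: the dominant term of $L\varphi$ as $\theta \to 0^+$ (or $\mu\pi^-$) comes from $(\varphi''+\varphi)\cdot (1+\varphi^2)\varphi$, and since $\varphi \sim A(\sin s)^p \to 0$ while $\varphi'' \sim -\tfrac{p(1-p)}{\mu^2}A(\sin s)^{p-2}$, this product behaves like $-\tfrac{p(1-p)}{\mu^2}A^2(\sin s)^{2p-2}$, which is large and negative, whereas the competing positive term $2(1+\varphi^2+\varphi'^2)$ grows only like $2\varphi'^2 \sim \tfrac{2p^2}{\mu^2}A^2(\sin s)^{2p-2}$.

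Thus near the endpoints the sign of the leading coefficient is governed by the comparison $-p(1-p) + 2p^2 = p(3p-1) < 0$, i.e. by requiring $p < \tfrac13$, equivalently $\alpha > 2$; the endpoint value $\alpha = 2$ is the borderline case and is the reason the statement allows $\alpha \ge 2$ (one takes $\alpha$ slightly above $2$, or handles $\alpha=2$ with the help of the $B$-term). So the next step is: fix $\alpha \ge 2$ (hence $p \le \tfrac13$) so that the endpoint behavior is favorable, and then check the inequality on the compact subinterval bounded away from the endpoints, say $\theta \in [\delta, \mu\pi - \delta]$, where all quantities $\sin s$, $\cos s$, $\varphi$, $\varphi'$ are bounded and $\sin s$ is bounded below. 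On this region $L\varphi$ need not be automatically negative for $B = 0$, so the role of $B(\sin s)^{q}$ with $q = \tfrac{1}{1+\beta}$ close to $1$ (i.e. $\beta$ small) is to add a controlled correction: with $\beta$ small the term $(\sin s)^{q}$ is almost $\sin s$, whose second derivative $+$ itself is $\tfrac{1}{\mu^2}(1-\mu^2)\sin s \cdot(\cdots)$ — one checks the cross terms between the $A$-part and the $B$-part and chooses $B$ large enough (relative to $A$) to dominate the $O(1)$ positive contributions on $[\delta,\mu\pi-\delta]$, while $B$ small enough that it does not spoil the endpoint analysis (where the $A$-term wins because $2p-2 < 2q-2$ only if $p > q$, which is false — so one must instead track that near the endpoint the $B$ contribution to $(\varphi''+\varphi)(1+\varphi^2)\varphi$ is $\sim -q(1-q)AB(\sin s)^{p+q-2}$, still negative, hence harmless).

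The main obstacle, and where care is genuinely needed, is the patching between the two regimes: one cannot simply say "negative near the endpoints and fix up the middle with $B$" because the constants interact — enlarging $B$ to control the middle changes the subleading terms near the endpoints, and the endpoint analysis at the borderline $\alpha = 2$ is delicate since the leading coefficient $p(3p-1)$ vanishes there and one must descend to the next order in $\sin s$. I would therefore organize the proof as: (i) expand $L\varphi$ exactly as a finite sum of terms of the form $(\text{const})(\sin s)^{m}(\cos s)^{2\ell}$ collecting powers; (ii) identify the lowest power of $\sin s$ appearing and show its coefficient is strictly negative once $\alpha \ge 2$ (treating $\alpha=2$ by going to the next power, which is where a suitable choice of $B\ge 0$ enters); (iii) on $\theta\in[\delta,\mu\pi-\delta]$ use compactness: all coefficients are continuous, $\sin s \ge \sin(\delta/\mu) > 0$, so for $A$ small and $B$ chosen appropriately (e.g. $A$ first small to kill the genuinely cubic-in-$\varphi$ positive terms, then $B$ to balance) the whole expression is $\le 0$; (iv) choose $\delta$ and then the constants in the right order so the two estimates are simultaneously valid. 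Monotonicity of $L$ in the parameter $A$ (it scales the dangerous term quadratically and the harmless ones at most quadratically) makes step (iii) work; the honest bookkeeping of (i)–(ii) at $\alpha = 2$ is the technical heart.
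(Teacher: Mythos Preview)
Your overall outline---compute $L$ on a single power, identify the endpoint leading order, then patch with an interior argument---matches the paper's strategy, and your threshold $p<\tfrac13$ (i.e.\ $\alpha>2$) for the endpoint sign is exactly the mechanism used. However, there is a genuine gap in how you propose to close the interior.

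The problem is the constant term $+2$ in $Lh=(h''+h)(1+h^2)h+2(1+h^2+h'^2)$. Since $L(0)=2>0$, you cannot make $L\varphi\le 0$ by taking the amplitude small: your suggestion ``$A$ first small to kill the genuinely cubic-in-$\varphi$ positive terms'' would drive $L\varphi\to 2$, not to something nonpositive. Likewise your monotonicity remark is off---the cubic part $(h''+h)h^3$ scales like $A^4$, not quadratically, and the $+2$ does not scale at all. The paper exploits precisely this: it fixes $B=CA$ with $C$ chosen so that the combined quantity $(\varphi''+\varphi)+C(\psi''+\psi)$ is uniformly $\le -\eta<0$ on the whole interval (here the $\psi$ term, with $\beta<\tfrac1\mu-1$, provides strict negativity away from the endpoints). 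Then on $\{\sin\tfrac{\theta}{\mu}\ge\delta\}$ the negative piece $I\le -A^4\eta\delta^{3/(1+\alpha)}$ beats the positive piece $II\le C_*A^2$ once $A$ is taken \emph{large}. Near the endpoints the $\varphi^{-2\alpha}$ singularity with $\alpha>2$ dominates the $+2$ directly, independent of $A\ge 1$.

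A second point: the role of the case split is reversed from what you suggest. The paper handles the borderline $\alpha=2$ with $B=0$ (and $A=\sqrt{3\mu}$) cleanly when $\mu\le\tfrac13$, because then $1-\tfrac{1}{9\mu^2}\le 0$ makes every term cooperate. The $B$-term is needed only for $\mu>\tfrac13$, where one takes $\alpha>2$ strictly and $\beta$ small depending on $\mu$; it is not there to rescue the $\alpha=2$ borderline.
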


\begin{proof}
For some $\alpha>0$, set
\begin{align}\label{def-x}
\varphi(\theta)=\left(\sin\frac{\theta}{\mu}\right)^{\frac{1}{1+\alpha}}.\end{align}
Then,
\begin{align*}
\varphi^{1+\alpha}=\sin\frac{\theta}{\mu}.
\end{align*}
A straightforward differentiation yields
\begin{align*}
\varphi'=\frac{\varphi^{-\alpha}}{1+\alpha}\frac{1}{\mu}\cos\frac{\theta}{\mu}
\end{align*}
and
$$\varphi''=-\frac{\varphi^{-\alpha}}{\mu^2(1+\alpha)}\sin\frac\theta\mu
-\frac{\alpha\varphi^{-\alpha-1}}{\mu(1+\alpha)}\varphi'\cos\frac\theta\mu.$$
By a simple substitution, we have
\begin{align*}
\varphi''&=-\frac{\varphi^{-\alpha}}{\mu^2(1+\alpha)}\varphi^{1+\alpha}
-\frac{\alpha\varphi^{-2\alpha-1}}{\mu^2(1+\alpha)^2}(1-\varphi^{2\alpha+2})\\
&=-\frac{1}{\mu^{2}(1+\alpha)^2}\varphi-\frac{\alpha}{\mu^2(1+\alpha)^2}\varphi^{-1-2\alpha}.
\end{align*}
Then, for some positive constant $A$,
\begin{align*}
L(A\varphi)&=A^{2}\varphi(1+A^{2}\varphi^{2})
\left[(1-\frac{1}{\mu^{2}(1+\alpha)^2})\varphi-\frac{2}{\mu^{2}(1+\alpha)^2}\varphi^{-2\alpha-1}\right]\\
&\qquad
+2\left[1+A^{2}\varphi^{2}+\frac{A^2}{\mu^2(1+\alpha^2)}\varphi^{-2\alpha}(1-\varphi^{2+2\alpha})\right].
\end{align*}

We first consider the case $\mu\le 1/3$.
With $\alpha=2$, we have
\begin{align*}
L(A\varphi)&=A^{2}\varphi(1+A^{2}\varphi^{2})
\left[(1-\frac{1}{9\mu^{2}})\varphi-\frac{2}{9\mu^{2}}\varphi^{-5}\right]
+2\left[1+A^{2}\varphi^{2}+A^{2}\frac{\varphi^{-4}}{9\mu^{2}}(1-\varphi^{6})\right]\\
&=A^{2}(3-\frac{3}{9\mu^{2}})\varphi^{2}+2
+A^{4}\varphi^{2}\left[(1-\frac{1}{9\mu^{2}})\varphi^{2}-\frac{2}{9\mu^{2}}\varphi^{-4}\right].\end{align*}
Hence,
\begin{align*}
L(\sqrt{3\mu} \varphi)\leq 2-2\varphi^{-2}\leq 0.
\end{align*}

Next, we consider the case $\mu>1/3$.
For some positive $\alpha$ and $\beta$, set $\varphi$ as in \eqref{def-x}
and
$$\psi(\theta)=\left(\sin\frac{\theta}{\mu}\right)^{\frac{1}{1+\beta}}.$$
Then,
$$\psi=\varphi^{\frac{1+\alpha}{1+\beta}}\leq \varphi.$$
For some positive constants $A$ and $B$, set
$$h=A\varphi+B\psi.$$
Now, we write
\begin{align}\label{Lf}Lh=I+II,\end{align}
where
\begin{align*}I&=[1+A^{2}\varphi^{2}+B^{2}\psi^{2}+2AB\varphi\psi]
    [A\varphi+B\psi]\\
&\qquad\cdot\bigg\{A\left[(1-\frac{1}{\mu^{2}(1+\alpha)^{2}})\varphi
    -\frac{\alpha}{\mu^{2}(1+\alpha)^{2}}\varphi^{-1-2\alpha}\right]\\
    &\qquad\quad+B\left[(1-\frac{1}{\mu^{2}(1+\beta)^{2}})\psi
    -\frac{\beta}{\mu^{2}(1+\beta)^{2}}\psi^{-1-2\beta}\right]\bigg\},\end{align*}
and
\begin{align*}
II&=2\bigg[1+A^{2}\varphi^{2}+B^{2}\psi^{2}+2AB\varphi\psi\\
&\qquad+A^{2}\frac{\varphi^{-2\alpha}}
    {\mu^2(1+\alpha)^2}(1-\varphi^{2+2\alpha})+B^{2}\frac{\psi^{-2\beta}}
    {\mu^2(1+\beta)^2}(1-\psi^{2+2\beta})\\
&\qquad +2AB\frac{\varphi^{-\alpha}}
    {\mu(1+\alpha)}\frac{\psi^{-\beta}}{\mu(1+\beta)}\sqrt{(1-\varphi^{2+2\alpha})
    (1-\psi^{2+2\beta})}\bigg].
\end{align*}
Fix an $\alpha\in(2,+\infty)$ and take
$$\beta=\text{min}\left\{\frac{1}{2}\left(\frac{1}{\mu}-1\right),\frac{1}{100}\right\}.$$
With $\psi\le 1$, it is easy to check
\begin{equation}\label{eq-I-beta}\left(1-\frac{1}{\mu^{2}(1+\beta)^{2}}\right)\psi
    -\frac{\beta}{\mu^{2}(1+\beta)^{2}}\psi^{-1-2\beta}<0.\end{equation}
In fact, we only need to require $\beta<1/\mu-1$.
Next, if $\sin\frac{\theta}{\mu}\leq\frac{1}{1+\alpha},$ i.e.,
$$\varphi\leq \left(\frac{1}{1+\alpha}\right)^{\frac{1}{1+\alpha}},$$
we have
\begin{equation}\label{eq-I-alpha}\left(1-\frac{1}{\mu^{2}(1+\alpha)^{2}}\right)\varphi
    -\frac{\alpha}{\mu^{2}(1+\alpha)^{2}}\varphi^{-1-2\alpha}<0.\end{equation}
For $\sin\frac{\theta}{\mu}\in[\frac{1}{1+\alpha},1],$ we have 
$\psi\ge (\frac{1}{1+\alpha})^{\frac1{1+\beta}}$ and
hence
\begin{align*}
&\left(1-\frac{1}{\mu^{2}(1+\alpha)^{2}}\right)\varphi
    -\frac{\alpha}{\mu^{2}(1+\alpha)^{2}}\varphi^{-1-2\alpha}\\
    &\qquad\quad+C\left[\left(1-\frac{1}{\mu^{2}(1+\beta)^{2}}\right)\psi
    -\frac{\beta}{\mu^{2}(1+\beta)^{2}}\psi^{-1-2\beta}\right]\\
&\quad\leq1-C\left(\frac{1}{\mu^{2}(1+\beta)^{2}}-1\right)\psi\le -1,
\end{align*}
by choosing $C>0$ large.
By combining with \eqref{eq-I-beta} and \eqref{eq-I-alpha}, we have, on $[0,\mu\pi]$,
\begin{align}\label{eq-I}\begin{split}
&\left(1-\frac{1}{\mu^{2}(1+\alpha)^{2}}\right)\varphi
    -\frac{\alpha}{\mu^{2}(1+\alpha)^{2}}\varphi^{-1-2\alpha}\\
    &\qquad+C\left[\left(1-\frac{1}{\mu^{2}(1+\beta)^{2}}\right)\psi
    -\frac{\beta}{\mu^{2}(1+\beta)^{2}}\psi^{-1-2\beta}\right]\le -\eta,\end{split}\end{align}
for some positive constant $\eta$.  With $A$ to be determined, we set
$$B=CA.$$
With such a choice of $B$, we proceed to prove $Lh\le 0$ for suitably chosen $A$.

We first consider
$\sin\frac{\theta}{\mu}<\frac{1}{1+\alpha}.$ By \eqref{eq-I-beta} and \eqref{eq-I-alpha}, we have
\begin{align*}I&\le1\cdot (A\varphi)\cdot A\left[\left(1-\frac{1}{\mu^{2}(1+\alpha)^{2}}\right)\varphi
    -\frac{\alpha}{\mu^{2}(1+\alpha)^{2}}\varphi^{-1-2\alpha}\right]\\
&\le -\frac{A^2\alpha}{\mu^{2}(1+\alpha)^{2}}\varphi^{-2\alpha}+C_1A^2\varphi^{-2\alpha+\tau},
\end{align*}
for some positive constant $\tau$. We note that the omitted terms in $I$ are all nonpositive.
Similarly, we have
$$II\le \frac{2A^2}{\mu^{2}(1+\alpha)^{2}}\varphi^{-2\alpha}+2+C_2A^2\varphi^{-2\alpha+\tau}.$$
Hence, by \eqref{Lf}, 
\begin{align*}
Lh\leq A^{2}\frac{2-\alpha}{\mu^{2}(1+\alpha)^{2}}\varphi^{-2\alpha}
+2+C_0A^2\varphi^{-2\alpha+\tau}.
\end{align*}
In the following, we always choose $A\ge 1$.
There exists a small $\delta$, independent of $A$, such that
$$Lh<0 \quad\text{if }\sin\frac{\theta}{\mu}\le\delta.$$
For $\sin\frac{\theta}{\mu}>\delta,$
we have, by \eqref{eq-I},
\begin{align*}
I\le -(A^2\varphi^2)\cdot(A\varphi)\cdot A\eta=-A^4\varphi^3\eta\le -A^4\eta \delta^{\frac{3}{1+\alpha}}.\end{align*}
On the other hand,
$$II\le C_*A^2.$$
Hence, by choosing $A$ sufficiently large, depending on $\delta$, we have
$$Lh=I+II<0\quad\text{if }\sin\frac{\theta}{\mu}\ge\delta.$$
We have the desired result.
\end{proof}

\begin{remark}
The supersolution $h$ in Lemma \ref{lemma-supersol} satisfies
$$h \leq C_{\alpha,\mu}(\sin\frac{\theta}{\mu})^{\frac{1}{1+\alpha}}.$$
In particular, for $\mu \leq 1/3$, we can take $\alpha=2$ and
$C_{2,\mu}\to0$ as $\mu\to0.$
\end{remark}

Next, we introduce an important transform.
For any $L>0,$ we define the operator $T_L$ by
\begin{align}\label{eq-T}
T_L(x_1,x_2,x_3)=\frac{L}{(x_1-L)^2+x_2^2+x_3^2}(L^2-x_1^2-x_2^2-x_3^2
,2Lx_2,2Lx_3).
\end{align}
Then, $T_L$ is an isometric automorphism on $\mathds{H}^3$ and,
restricted on $\mathbb R^{2}\times{\{x_3=0\}},$
$T_L$ is a conformal transform, which maps the point $(L,0,0)$ to infinity.
In fact, $T_L(x_1,x_2,x_3)$ is a composition of the following transformations.

First, consider
$$G_{1}: (x_1,x_2,x_3)\mapsto (x_1,x_2,x_3+L).$$
Then, $G_1$ maps $\{(x_1,x_2,x_3)|\,x_3>0\}$ to $\{(x_1,x_2,x_3)|\,x_3>L\}$. Second, consider
$$G_{2}: (x_1,x_2,x_3)\mapsto \frac{2L^{2}}{(x_1^2+x_2^2+x_3^2)}(x_1,x_2,x_3).$$
Then, $G_2$ maps $\{(x_1,x_2,x_3)|\,x_3>L\}$ to $\{(x_1,x_2,x_3)|\,x_1^2+x_2^2+(x_3-L)^2< L^2\}$
and maps $\{(x_1,x_2,x_3)|\,x_1^2+x_2^2+(x_3-L)^2< L^2\}$ to $\{(x_1,x_2,x_3)|\,x_3>L\}$.
Next, consider
$$G_{3}: (x_1,x_2,x_3)\mapsto (x_1,x_2,x_3-L).$$
Then, $G_3$ maps $\{(x_1,x_2,x_3)|\,x_3>L\}$ to $\{(x_1,x_2,x_3)|\,x_3>0\}$.
Last, consider $$G_{4}: (x_1,x_2,x_3)\mapsto (x_3,x_2,-x_1).$$
Then, $G_4$ is an orthogonal transform which
rotates the $x_1x_{3}$-plane by ${\pi}/{2}$ clockwisely.
Then,
$$T_L=G_3G_2 G_1G_4G_3G_{2}G_1.$$
It is easy to see
$$\frac{2L^2 x_3}{(x_1-L)^2+x_2^2+x_3^2}\rightarrow 0
\quad\text{as }x_1^2+x_2^2+x_3^2\rightarrow\infty.$$

Now we proceed to prove the existence and  uniqueness of solutions of \eqref{eq-MinGmain}
in cones. 

\begin{theorem}\label{thrm-Existence}
Let $V_\mu$ the the cone as in \eqref{eq-definition-cone}, for some $\mu\in (0,1)$. 
Then, there exists a unique solution $f$ of \eqref{eq-MinGmain} for
$\Omega=V_{\mu}$. Moreover, $f$ has the form $rh(\theta)$.
\end{theorem}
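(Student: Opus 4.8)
The plan is to establish existence by constructing a solution of the reduced ODE \eqref{po-MainEq} on $(0,\mu\pi)$ using the method of sub- and supersolutions (Perron's method), and to establish uniqueness via the maximum principle applied to the original PDE \eqref{eq-MinGmain} in the cone. For existence, I would first observe that a positive solution $h$ of $Lh=0$ on $(0,\mu\pi)$ with $h\to 0$ at the endpoints yields $f=rh(\theta)$ solving \eqref{eq-MinGmain} in $V_\mu$; conversely any solution of this scale-invariant form reduces to solving the ODE. The supersolution is supplied by Lemma \ref{lemma-supersol}: the function $\bar h=A(\sin\frac\theta\mu)^{1/(1+\alpha)}+B(\sin\frac\theta\mu)^{1/(1+\beta)}$ satisfies $L\bar h\le 0$, i.e. it is a supersolution for the ODE operator (equivalently the graph of $r\bar h(\theta)$ is a supersolution of the PDE). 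For a subsolution, I would take $\underline h=\varepsilon\sin\frac\theta\mu$ for $\varepsilon>0$ small: a direct computation of $L(\varepsilon\sin\frac\theta\mu)$ shows the dominant term as $\varepsilon\to 0$ is $2>0$, so $L\underline h\ge 0$, and clearly $\underline h\le\bar h$ after adjusting constants. Both vanish at $\theta=0,\mu\pi$, matching the boundary condition.

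Given these ordered barriers, I would run Perron's method (or, equivalently, solve the Dirichlet problem for $Lh=0$ on $(\delta,\mu\pi-\delta)$ with boundary data pinched between $\underline h$ and $\bar h$, then let $\delta\to 0$): the standard interior elliptic estimates for the quasilinear ODE, together with the uniform $C^0$ bound $\underline h\le h\le\bar h$, give a solution $h\in C^\infty((0,\mu\pi))\cap C(\,[0,\mu\pi]\,)$ with $h>0$ inside and $h(0)=h(\mu\pi)=0$. Because $f=rh(\theta)$ automatically has the right homogeneity, it satisfies \eqref{eq-MinGmain} in all of $V_\mu$, is smooth in the interior, continuous up to the boundary, and positive — establishing existence. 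The squeeze $\underline h\le h\le\bar h$ also records the boundary decay rate, which will be needed later.

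For uniqueness, I would argue directly on the PDE using a comparison principle for \eqref{eq-MinGmain} in the (unbounded) cone. The difficulty with unbounded domains is the absence of a boundary condition at infinity, so I would exploit the isometric automorphism $T_L$ of $\mathbb H^3$ introduced before the statement: composing the graph of any solution with the hyperbolic isometry induced by $T_L$ transforms the problem in the cone $V_\mu$ (whose asymptotic boundary is two rays through the origin) into a problem in a bounded region, turning the point at infinity into an ordinary boundary point where both solutions vanish and are controlled. Equivalently, one uses $T_L$ to move a competing solution and apply the interior/boundary maximum principle on the compactified picture; scale invariance then upgrades agreement near one scale to agreement everywhere. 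Two solutions $f_1,f_2$ in $V_\mu$ would be compared by showing $f_1\le(1+\epsilon)f_2$ for every $\epsilon>0$: the function $(1+\epsilon)f_2$ is a strict supersolution, the difference is controlled near $\partial V_\mu$ by the common vanishing (with rate from the barriers) and near infinity by the $T_L$-compactification, so the maximum principle forces $f_1\le f_2$, and symmetrically $f_2\le f_1$.

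The main obstacle I anticipate is the uniqueness argument at infinity: making rigorous the claim that no spurious solution can grow (or fail to decay) at the vertex/infinity of the cone. One must verify that the candidate solutions, after the $T_L$ transform, extend continuously to the image of the point at infinity with value zero — this uses the precise decay of $h$ near $\theta=0,\mu\pi$ from the barrier construction and the geometry of how $T_L$ maps the two bounding rays — and that the resulting comparison on the transformed bounded domain has no boundary contribution from the previously-at-infinity point. Handling the corner where the two rays meet (at the origin, and its image) simultaneously with this requires care, since the domain after transformation still has non-smooth boundary; I would address this by the localization/removability arguments of Section \ref{sec-Localization} or by an exhaustion that avoids both the vertex and infinity and then passes to the limit using the uniform barriers.
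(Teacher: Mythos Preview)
Your existence approach differs genuinely from the paper's. The paper does not solve the ODE \eqref{po-MainEq} directly; instead it takes the unique solution $f_{\mu,R}$ of \eqref{eq-MinGmain} on the truncated cone $V_{\mu,R}$ (existence from \cite{HanShenWang}), observes that these solutions are monotone in $R$ by the maximum principle and uniformly bounded above by the supersolution of Lemma~\ref{lemma-supersol}, obtains interior gradient bounds, and passes to the limit $R\to\infty$. The homogeneous form $f=rh(\theta)$ then falls out of the scaling identity $f_{\mu,kR}(x)=kf_{\mu,R}(x/k)$ in the limit. Your Perron-type attack on the ODE is plausible and has the merit of producing the homogeneous form directly, but you would have to handle the singularity of the ODE at the endpoints (where $h\to0$ forces $h''\sim -2(1+h'^2)/h\to-\infty$) more carefully than you indicate; the paper's route sidesteps this by leaning on the PDE theory for bounded domains already in hand.

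For uniqueness, your core idea---use the isometry $T_L$ to convert the cone problem into one over a bounded domain---is exactly the paper's. The paper, however, executes it in one stroke: $T_1^{-1}$ sends $V_\mu$ to an intersection of two disks $\widetilde\Omega$, the graphs $\{(x,f_i(x))\}$ (absolutely area-minimizing by Remark~2.3 of \cite{Lin1989Invent}) are sent to area-minimizing graphs over $\widetilde\Omega$, and Corollary~2.4 of \cite{Lin1989Invent} gives $\widetilde f_1=\widetilde f_2$ immediately. No separate maximum-principle argument on the transformed picture is needed, and none of your worries about the behavior at the images of the vertex and of infinity arise.

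The extra details you add introduce two real problems. First, the assertion that $(1+\epsilon)f_2$ is a strict supersolution of \eqref{eq-MinGmain} is unjustified: the operator is not homogeneous under scalar multiplication of $f$ alone (the hyperbolic isometry that preserves the equation is $(x,t)\mapsto(\lambda x,\lambda t)$, i.e.\ $f\mapsto\lambda f(\cdot/\lambda)$, not $f\mapsto\lambda f$), and a direct computation shows that the sign of $Q((1+\epsilon)f)$ depends on a competition between the Hessian term and the $n/f$ term with no evident winner. Second, invoking Section~\ref{sec-Localization} to handle the corner after transformation would be circular: Lemma~\ref{localization} already uses the solution $f_\mu$ in the cone (see \eqref{1} and the appeal to Corollary~\ref{cor-Existence}), which is precisely what Theorem~\ref{thrm-Existence} constructs.
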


\begin{proof}
We first prove the existence.
For any $R>0$, set \begin{align*}
V_{\mu,R}&=\{(r,\theta)|\, r\in(0,R),\theta\in(0,\mu\pi)\}.
\end{align*}
By Theorem 3.1 \cite{HanShenWang}, there exists a
unique solution $f_{\mu,R}$ of \eqref{eq-MinGmain} for $\Omega=V_{\mu,R}$
and, by the maximum principle and Lemma \ref{lemma-supersol},
\begin{align}\label{eq-estimate-f}
f_{\mu,R}\leq  C_{\alpha,\mu }r\left(\sin\frac{\theta}{\mu}\right)^{\frac{1}{1+\alpha}}
\quad\text{in }V_{\mu, R}.
\end{align}
By the maximum principle again, we have,
for any positive $R_{1}$ and $R_{2}$ with $R_{1}<R_{2}$, 
\begin{align}\label{eq-increasing-f}
f_{\mu,R_{1}}\leq f_{\mu,R_{2}}
\quad\text{in }V_{\mu, R_1}.
\end{align}
Next, the uniqueness and scaling imply
$$f_{\mu,k}(x)=kf_{\mu,1}\left(\frac{x}{k}\right).$$
For any positive $\delta$ sufficiently small, set
\begin{align*}
    W_{\mu,\delta}=\{(r,\theta)\in V_{\mu}|r\ \in (0,\infty),\theta\in (\delta\pi,(\mu-\delta)\pi)\}.
\end{align*}
Then for any $x\in W_{\mu,\delta},$ we have 
$$d\geq (\sin\delta\pi) r.$$ 
By employing the method in the proof of Theorem 3.2 in \cite{HanShenWang},
we can prove, for any $x\in W_{\mu,\delta}$ and any $k \geq 2[|x|+1]$, 
\begin{align}\label{fk grad estimate}
    |\nabla f_{\mu, k}|\leq C(\mu,\delta, x).
\end{align}
In fact, for any $k \geq 2[|x|+1]$, $ B_{{d_x}/{4}}(x) \subseteq V_{\mu,k}$. 
By Step 1 in the proof of Theorem 3.2 in \cite{HanShenWang}, we have
\begin{align}\label{fk int estimate}
    \|\nabla f_{\mu, k}\|_{L^p(B_{{d_x}/{4}}(x))}\leq C(p,\mu,\delta)d^\frac{n}{p}.
\end{align}
Note that ${f_{\mu, k}}\geq{f_{\mu, 2[|x|+1]}}$ for any $k \geq 2[|x|+1]$, by  \eqref{eq-increasing-f}.
By combining with \eqref{fk int estimate} and proceeding as in 
Step 2 in the proof of Theorem 3.2  in \cite{HanShenWang}, we can get 
\eqref{fk grad estimate} by applying the $W^{2,p}$-estimate. 
Now for any $x\in V_{\mu},$ there exists a positive small $\delta$ such that
$x\in W_{\mu,\delta}.$ Then for any $y\in B_{{d_x}/{4}}(x)$ and large enough positive $k,$ we have
$$|f_{\mu,k}(x)-f_{\mu,k}(y)|
\leq C(\mu,\delta,x)|x-y|.$$
Therefore, by \eqref{eq-increasing-f} and the interior estimate,
for any $x\in V_{\mu},$ we have that $f_{\mu,R}(x)$ converges to some $f_{\mu}(x)$
as $R\to \infty$ and  $f_{\mu}\in C^{\infty}(V_{\mu})$ is a solution of
\begin{equation}\Delta f-\frac{f_{i}f_{j}}{1+|\nabla f|^{2}}f_{ij}+\frac{2}{f} =0 \quad\text{in }V_{\mu}.
\end{equation}
By \eqref{eq-estimate-f}, $f_\mu$ is continuous up to the boundary of $V_\mu$ and
$f_{\mu}=0$ on $\partial V_\mu$. In summary, $f_\mu$ is a solution of
\eqref{eq-MinGmain} for
$\Omega=V_{\mu}$.
Moreover, for any positive integer $k,$
$f_{\mu,kR}(x)$ converges to $f_{\mu}(x)$ as $R$ goes to infinity. By the property
\begin{align*}
f_{\mu,kR}(x)=kf_{\mu,R}\left(\frac{x}{k}\right),\end{align*}
we obtain
\begin{align*}f_{\mu}(x)=kf_{\mu}\left(\frac{x}{k}\right).\end{align*}
Therefore, we can write  $f_{\mu}=rh(\theta)$ for some function $h=h(\theta)$ on
$(0,\mu\pi)$.

We now prove the uniqueness.
For convenience, we rotate $\mathbb R^2$ and assume
\begin{align}
V_{\mu}&=\left\{(r,\theta)|\, r\in(0,\infty),\theta\in\left(-\frac{\mu\pi}{2},\frac{\mu\pi}{2}\right)\right\}.
\end{align}
Let $f_1$ and $f_2$ be two solutions of \eqref{eq-MinGmain} for $\Omega=V_{\mu}.$
By Remark 2.3 \cite{Lin1989Invent}, $\{(x,f_1(x))\}$ and
$\{(x,f_2(x))\}$ are two absolutely area-minimizing hypersurfaces with
the asymptotic boundary $\partial V_\mu$. Let $T_1$ be the map defined in \eqref{eq-T} with
$L=1$. Then,
$T_1^{-1}|\{x_3=0\}$ maps $V_{\mu}$ conformally to
$$\widetilde \Omega=
B_{\frac{1}{\sin\frac{\mu\pi}{2}}}\left(\left(0,-\cot\frac{\mu\pi}{2}\right)\right)
\bigcap B_{\frac{1}{\sin\frac{\mu\pi}{2}}}\left(\left(0,\cot\frac{\mu\pi}{2}\right)\right),$$
and maps the absolutely area-minimizing hypersurface
$\{(x,f_i(x))\}$ with the asymptotic boundary $\partial V_{\mu}$
to the absolutely area-minimizing hypersurface
$\{(y,\widetilde f_{i}(y))\}$
with the asymptotic boundary
$\partial \widetilde\Omega$, $i=1,2$.
By Corollary 2.4 \cite{Lin1989Invent}, $\widetilde f_{1}=\widetilde f_2$. Hence, $f_1= f_2$.
\end{proof}

Next, we proceed as Lin \cite{Lin1989Invent}. Let $f$ be a solution
of \eqref{eq-MinGmain} in $\Omega$.
Locally near each boundary point,
the graph of $f$ can be represented by a function
over its vertical tangent plane. Specifically,
we fix a boundary point of $\Omega$, say the origin, and assume that
the vector $e_n=(0,\cdots, 0,1)$ is the interior normal vector to $\partial\Omega$
at the origin. Then, with $x=(x',x_n)$, the $x'$-hyperplane is the tangent plane of
$\partial\Omega$ at the origin and the boundary $\partial\Omega$ can be expressed
in a neighborhood of the origin as a graph of a smooth function over $\mathbb R^{n-1}\times\{0\}$,
say
$$x_n=\varphi(x').$$
We now denote points in $\mathbb R^{n+1}=
\mathbb R^n\times\mathbb R$ by $(x',x_n,t)$. The vertical hyperplane
given by $x_n=0$ is the tangent plane to the graph of $f$ at the origin in $\mathbb R^{n+1}$.
We can represent the graph of $f$ as a graph of a new function $u$ defined in terms of
$(x', 0, t)$ for small $x'$ and $t$, with $t>0$. In other words, we treat
$\mathbb R^n=\mathbb R^{n-1}\times\{0\}\times\mathbb R$ as our new base space and write
$u=u(x', t)$. Then, for some $R>0$,
$u$ satisfies
\begin{align}\label{eq-Intro-Equ}
\Delta u - \frac{u_i u_j}{1+|\nabla u|^2}u_{ij}-\frac{n u_{t}}{t}=0  \quad \text{in } B_R^+
\end{align}
and
\begin{align}\label{eq-Intro-EquCondition}
u(\cdot, 0)=\varphi\quad\text{on }B_R'.
\end{align}
We note that $u$ and $f$ are related by
\begin{align}\label{eq-u}
x_{n}=u(x',t)
\end{align}
and
\begin{align}\label{eq-f}
t=f(x',x_n).
\end{align}
Set
\begin{align}\label{b1a-v}
u_{n+1}(x',t)=\varphi(x')+\sum_{i=2}^{n+1}c_i(x')t^i+c_{n+1,1}(x')t^{n+1}\log t.
\end{align}
 We have the following result.

\begin{lemma}\label{lemma-Expension-u}
For some constant $\alpha\in (0,1)$,
let $\varphi\in C^{n+1,\alpha}(B'_R)$ be a given function
and $u\in C(\bar B^+_R)\cap C^\infty(B^+_R)$ be
a solution of \eqref{eq-Intro-Equ}-\eqref{eq-Intro-EquCondition}. Then,
there exist functions  $c_i\in C^{n+1-i, \epsilon}(B_R')$, for $i=0, 2, 4, \cdots, n+1$,
$c_{n+1,1}\in C^{\epsilon}(B_R')$,
and any $\epsilon\in (0,\alpha)$,
such that,
for $u_{n+1}$ defined as in \eqref{b1a-v},
for any $m=0, 1, \cdots, n+1$, any $\epsilon\in (0,\alpha)$, and any $r\in (0, R)$,
\begin{equation*}
\partial_t^m (u-u_{n+1})\in C^{\epsilon}(\bar B^+_r),
\end{equation*}
and, for any $(x',t)\in
B^+_{R/2}$,
\begin{equation*}
|\partial_t^m (u-u_{n+1})(x',t)|
\le C t^{n+1-m+\alpha},
\end{equation*}
for some positive constant $C$ depending only on $n$,  $\alpha$, $R$,
the $L^\infty$-norm of $u$ in $B_R^+$, and the $C^{n+1, \alpha}$-norm  of $\varphi$ in
$B_R'$.
\end{lemma}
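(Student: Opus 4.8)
The plan is to establish the asymptotic expansion \eqref{b1a-v} by a bootstrap argument on the regularity of the derivatives $\partial_t^m u$, treating \eqref{eq-Intro-Equ} as a perturbation of the model equation $\Delta' u + u_{tt} - \frac{n}{t}u_t = 0$, whose kernel (among functions smooth in $x'$) consists of the even powers $t^0, t^2, t^4, \dots$ together with the resonant term $t^{n+1}$ and its companion $t^{n+1}\log t$. First I would freeze the leading behavior: since $u\in C(\bar B_R^+)$ solves \eqref{eq-Intro-Equ} with $u(\cdot,0)=\varphi\in C^{n+1,\alpha}$, standard boundary gradient estimates for this (degenerate at $t=0$) equation give $u - \varphi = O(t)$, and then elliptic interior estimates applied on dyadic half-balls $B_{t/2}^+(x',t)$, combined with rescaling $t\mapsto \lambda t$, upgrade this to $|\partial_t^m(u-\varphi)|\le C t^{2-m}$ for $m=0,1,2$ up to a logarithmic loss that is absorbed because the first resonance is at order $n+1\ge 2$. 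This identifies $c_0 = \varphi$ and forces $c_1\equiv 0$.

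The core of the argument is the inductive step. Suppose we have produced $c_0,\dots,c_{2k}$ (with odd coefficients vanishing) and shown $w_k := u - \big(\varphi+\sum_{2\le i\le 2k}c_i t^i\big)$ satisfies $|\partial_t^m w_k|\le C t^{2k+\epsilon'-m}$ for a suitable $\epsilon'$. Plugging $w_k$ into \eqref{eq-Intro-Equ} and expanding the quasilinear terms $\frac{u_iu_j}{1+|\nabla u|^2}u_{ij}$, one finds that $w_k$ solves $\Delta' w_k + (w_k)_{tt} - \frac{n}{t}(w_k)_t = g_k$ where $g_k$ is an explicit polynomial expression in the $c_i$'s and their $x'$-derivatives (with controlled Hölder norms) plus an error of size $O(t^{2k+\epsilon'-1})$. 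Matching the leading term of $g_k$ against the indicial operator $P(j) = j(j-1) - nj = j(j-1-n)$ determines the next coefficient: for $2k+2 \le n$ one solves $P(2k+2)c_{2k+2} = (\text{data from } g_k)$, so $c_{2k+2}\in C^{n-1-2k,\epsilon}$; at the resonant order $j = n+1$ the operator $P$ is singular, which is exactly where the logarithmic term $c_{n+1,1}(x')t^{n+1}\log t$ must be introduced to solve $P(n+1)\,[\,\cdot\,] $ in the extended space $\mathrm{span}\{t^{n+1}, t^{n+1}\log t\}$, yielding $c_{n+1,1}\in C^\epsilon$ and $c_{n+1}\in C^\epsilon$. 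After subtracting the new term, rescaled interior Schauder estimates on dyadic half-annuli give the improved bound $|\partial_t^m(u - u_{j})|\le C t^{j+\epsilon''-m}$, and iterating until $j = n+1$ (then using that $g_{n+1}\in C^{\epsilon}$ has no further resonance below order $n+1+\alpha$) produces the final estimate $|\partial_t^m(u-u_{n+1})|\le C t^{n+1-m+\alpha}$ for $m=0,\dots,n+1$, together with the claimed $C^\epsilon$ regularity up to $\{t=0\}$ of each $\partial_t^m(u-u_{n+1})$.

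The main obstacle is the resonance at $j = n+1$: the indicial root $n+1$ coincides with the order at which the Hölder regularity $C^{n+1,\alpha}$ of $\varphi$ runs out, so one must simultaneously (i) track that the data $g_k$ feeding into that order is only $C^\epsilon$ (not better), (ii) verify that the would-be obstruction to solving $P(n+1)c_{n+1} = \text{data}$ is precisely what the $\log$ term absorbs, and (iii) control the $\log t$ factors in the rescaled Schauder iteration so they do not accumulate — this works because after the single $\log$ term is extracted the remaining error genuinely improves by a power $t^{\alpha-\epsilon}>0$ at each step rather than by a borderline amount. A secondary technical point is uniformity of constants: all interior estimates are applied after the scaling $(x',t)\mapsto (x_0' + \rho x', \rho t)$ which leaves \eqref{eq-Intro-Equ} invariant, so the constant $C$ depends only on $n,\alpha,R$, $\|u\|_{L^\infty(B_R^+)}$ and $\|\varphi\|_{C^{n+1,\alpha}(B_R')}$ as asserted, provided one checks that the quasilinear coefficients $\frac{u_iu_j}{1+|\nabla u|^2}$ stay uniformly bounded and uniformly Hölder on the dyadic scales, which follows from the gradient bounds established in the first step.
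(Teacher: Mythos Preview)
The paper does not actually prove this lemma: immediately after the statement it says only that the result follows from Theorem~1.1 of \cite{HanJiang2014} by taking $\ell=k=n+1$, and then records that $c_2,\dots,c_n,c_{n+1,1}$ are locally determined by $\varphi$ while $c_{n+1}$ is the first nonlocal coefficient. So there is no ``paper's own proof'' to compare against beyond that citation.

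Your sketch is a reasonable outline of the standard polyhomogeneous-expansion argument that presumably underlies the cited result: identify the indicial operator of $\partial_t^2-\frac{n}{t}\partial_t$ with roots $0$ and $n+1$, build the coefficients $c_i$ inductively by matching orders, introduce the $t^{n+1}\log t$ term at the resonant index, and close with rescaled interior Schauder estimates on dyadic half-balls. Two small points of precision: the ``kernel'' of the model operator is spanned by $t^0$ and $t^{n+1}$ only---the intermediate even powers $t^2,t^4,\dots$ are not kernel elements but arise from forcing by $\Delta'\varphi$ and the quasilinear terms; and the vanishing of odd coefficients below order $n+1$ is a consequence of the $t\mapsto -t$ invariance of \eqref{eq-Intro-Equ}, which you invoke without comment. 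Neither of these affects the validity of the scheme. What your write-up buys over the paper's one-line citation is self-containedness; what it costs is that the genuinely delicate step---uniform control of the quasilinear coefficients $\frac{u_iu_j}{1+|\nabla u|^2}$ in $C^\epsilon$ on all dyadic scales, needed to iterate the Schauder estimate without loss---is asserted rather than carried out, and this is exactly where the work in \cite{HanJiang2014} lies.
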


Lemma \ref{lemma-Expension-u} follows from Theorem 1.1 \cite{HanJiang2014}
by taking $\ell=k=n+1$. In fact, $c_2, \cdots, c_n$ and $c_{n+1,1}$ are coefficients
for local terms and have explicit expressions in terms of $\varphi$. Meanwhile,
$c_{n+1}$ is the coefficient of the first nonlocal term.

\begin{corollary}\label{cor-Existence}
Let $V_\mu$ be the cone as in \eqref{eq-definition-cone}, 
for some $\mu\in (0,1)$, and let $f=rh(\theta)$ be the solution of \eqref{eq-MinGmain} for
$\Omega=V_{\mu}$ as in Theorem \ref{thrm-Existence}.
Then,
$$\lim_{\theta\to0}\theta^{-1/3}h(\theta)>0,$$
and
$$\lim_{\theta\to0}\theta^{2/3}h'(\theta)>0.$$
\end{corollary}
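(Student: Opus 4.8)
The plan is to extract the behavior of $h$ near the endpoint $\theta = 0$ from the solution $f = rh(\theta)$ in the cone $V_\mu$ by applying the boundary expansion of Lemma \ref{lemma-Expension-u} at a regular boundary point of $V_\mu$, namely at any interior point $x_0$ of the edge $\{\theta = 0\}$, and then passing $x_0 \to 0$. Concretely, I would fix a point $x_0 = (a, 0)$ with $a > 0$ lying on the open ray $\theta = 0$. Near $x_0$ the boundary $\partial V_\mu$ is the flat line $\theta = 0$, so the tangent-plane representation of Lemma \ref{lemma-Expension-u} applies with $n = 2$ and with $\varphi \equiv 0$ (a smooth, indeed real-analytic, boundary function). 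The lemma then gives, with $u_{n+1} = u_3$, an expansion
\begin{align*}
u(x', t) = c_2(x') t^2 + c_3(x') t^3 + c_{3,1}(x') t^3 \log t + O(t^{3+\alpha}),
\end{align*}
and, crucially, the coefficient $c_2$ is an explicit local expression in $\varphi$; since $\varphi \equiv 0$ here, one reads off $c_2 = $ (a universal constant)$/ H_{\partial\Omega}$-type quantity. For a straight edge the relevant mean curvature is $0$, so I expect instead that the leading term is not $t^2$; rather, the correct normalization near a flat piece of boundary is governed by \eqref{eq-C^2BasicExpansion} which requires $H > 0$. This is exactly the subtlety the introduction warns about, so the flat-edge expansion must be handled more carefully.

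The cleaner route — and the one I would actually pursue — is to use the explicit conformal/isometric picture already set up in the proof of Theorem \ref{thrm-Existence}. There the map $T_1^{-1}|_{\{x_3=0\}}$ sends $V_\mu$ to the lens domain $\widetilde\Omega$, an intersection of two round balls of radius $1/\sin(\mu\pi/2)$, and sends the minimal graph of $f$ to the minimal graph of $\widetilde f$ over $\widetilde\Omega$. The boundary $\partial\widetilde\Omega$ is smooth and strictly convex away from its two corner points, with strictly positive curvature there; in particular near a smooth boundary point of $\widetilde\Omega$ we may apply \eqref{eq-C^2BasicExpansion}, giving $\widetilde f \sim (2\widetilde d / H_{\partial\widetilde\Omega})^{1/2}$ with $H_{\partial\widetilde\Omega} > 0$. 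Transporting this back through the explicit conformal factor of $T_1$ (which is smooth and nonvanishing on the relevant region, and which I can compute from \eqref{eq-T}) converts the $\widetilde d^{1/2}$ behavior of $\widetilde f$ near $\partial\widetilde\Omega$ into the behavior of $f = r h(\theta)$ near the rays $\theta = 0$ and $\theta = \mu\pi$. The point $\theta \to 0$ with $r$ fixed corresponds under $T_1^{-1}$ to approaching $\partial\widetilde\Omega$ at a definite smooth point, transversally; since $\widetilde d$ vanishes to first order there, $\widetilde f$ vanishes like (distance)$^{1/2}$, and pulling back, $h(\theta)$ vanishes like $\theta^{1/2}$ — which contradicts the claimed exponent $1/3$. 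So in fact I must reconcile this: the exponent $1/3$ is the one forced by the supersolution in Lemma \ref{lemma-supersol} (with $\alpha = 2$, giving the power $1/(1+\alpha) = 1/3$), and the matching lower bound $h(\theta) \ge c\,\theta^{1/3}$ should come from constructing a subsolution of $L$ of the same form, $h_- = \epsilon (\sin(\theta/\mu))^{1/3}$, and invoking the maximum principle for \eqref{po-MainEq} on $V_{\mu,R}$ together with \eqref{eq-increasing-f} and a boundary-point argument; the endpoint $x_0$ plays no special role once two-sided bounds $c\,\theta^{1/3} \le h(\theta) \le C\,\theta^{1/3}$ are in hand, and then $\lim_{\theta\to 0}\theta^{-1/3} h(\theta)$ exists and is positive provided one also shows monotonicity or a genuine asymptotic (via the ODE) rather than mere bounds.

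For the derivative statement, once $h \sim c\theta^{1/3}$ is established I would return to the second-order ODE $Lh = 0$, i.e. \eqref{po-MainEq} rewritten as $h'' + h = -2(1+h^2+h'^2)/[(1+h^2)h]$. Near $\theta = 0$ the right side blows up like $-2/h \sim -2 c^{-1} \theta^{-1/3}$ (assuming $h'$ stays controlled), so $h'' \sim -2 c^{-1}\theta^{-1/3}$, and integrating once gives $h' \sim -3 c^{-1}\theta^{2/3} \cdot(\cdots)$ — i.e. $\theta^{2/3} h'(\theta)$ tends to a positive limit, with the sign and value consistent with $h' = \tfrac{1}{3}\mu^{-1}\,(\sin(\theta/\mu))^{-2/3}\cos(\theta/\mu)\cdot(\cdots)$ from differentiating the leading profile; one checks $\tfrac{d}{d\theta}[c(\sin(\theta/\mu))^{1/3}] = \tfrac{c}{3\mu}(\sin\tfrac\theta\mu)^{-2/3}\cos\tfrac\theta\mu$, so $\theta^{2/3}h' \to c/(3\mu^{2/3}) > 0$. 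Making this rigorous requires an a priori bound on $h'$ near the endpoint — the main obstacle — which I would get either from the gradient estimate \eqref{fk grad estimate} (interior-to-the-cone estimates do not reach the edge, so instead one needs the boundary gradient control implicit in Lin's regularity $f \in C^{1/2}$, differentiated along the boundary), or, more robustly, by differentiating the two-sided power bounds: since $h$ is squeezed between $c_\pm(\sin(\theta/\mu))^{1/3}$ and solves a nondegenerate ODE on $(0,\mu\pi)$, a barrier/comparison argument on the linearized equation pins down $h'$ to the predicted order. The genuinely delicate point throughout is establishing the \emph{existence} of the limits (not just two-sided bounds of the right order); for that I expect to need a monotonicity or convexity property of $\theta^{-1/3}h(\theta)$ derived from the structure of $L$, and this ODE analysis near the degenerate endpoint is where the real work lies.
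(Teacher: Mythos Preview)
Your first instinct was correct and you abandoned it too quickly. Lemma \ref{lemma-Expension-u} applied at a point of the flat edge $\theta=0$ is exactly the paper's argument: since $\varphi\equiv 0$, the \emph{local} coefficients $c_2$ and $c_{3,1}$ (which are explicit differential expressions in $\varphi$) vanish identically, so the expansion reads $u=c_3\,t^3+O(t^{3+\alpha})$. This is not a breakdown; it is precisely the $\theta^{1/3}$ law. At $x'=1$ one has $u=\tan\theta$ and $t=h(\theta)/\cos\theta$, whence $h(\theta)=c_3^{-1/3}\theta^{1/3}+O(\theta^{2/3})$ --- a genuine asymptotic, so the limit exists. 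Positivity of $c_3$ follows from the supersolution of Lemma \ref{lemma-supersol} with some $\alpha\in(2,3)$: it forces $u\ge c\,t^{1+\alpha}$, which is incompatible with $u=O(t^4)$. For $h'$, Lemma \ref{lemma-Expension-u} also gives $u_t=3c_3t^2+O(t^{2+\alpha})$; the chain rule $u_\theta=u_t\,t_\theta$ (i.e., $\sec^2\theta=u_t\cdot(h'\cos\theta+h\sin\theta)/\cos^2\theta$) then yields $h'(\theta)=\tfrac13 c_3^{-1/3}\theta^{-2/3}+O(\theta^{-1/3})$. No separate ODE barrier or monotonicity argument is needed.

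Your alternative routes each fail for the reasons you partly anticipated. The sub/supersolution idea gives two-sided bounds $c\,\theta^{1/3}\le h\le C\,\theta^{1/3}$ but not the existence of the limit; your ODE bootstrap for $h'$ presupposes the very control on $h'$ you are trying to prove. The conformal approach yields the wrong exponent because the base point $(y_1,y_2)$ of the transformed graph depends on $f$: one has $(y_1,y_2)=T_{L,0}(x)+O(f^2)$, and this shift changes $\widetilde d(y)$ by a term of order $\widetilde f^2/\widetilde R$, comparable to $\widetilde d$ itself. The correct bookkeeping (done in Section~\ref{sec-C-2,alpha-boundary}) leads to a cubic relation, not a quadratic one, and recovers the exponent $1/3$; but that is a much longer path than simply reading off $c_2=c_{3,1}=0$ in Lemma \ref{lemma-Expension-u}.
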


\begin{proof} Take $n=2$ and consider \eqref{b1a-v} at $(r,\theta)=(1,0)\in \partial V_\mu$. 
Then, $c_2=0$, $c_{3,1}=0$, and, by renaming the coefficient for $t^3$, 
\begin{align}\label{u exp}
    u=a_\mu t^3+O(t^4),
\end{align}
where $a_\mu$ is a constant and 
\begin{align}\label{2}
\begin{split}
    u&=\tan\theta,\\
    t&=f=\frac{1}{\cos \theta}h(\theta).
\end{split}
\end{align}
We write the coefficient of $t^3$ as $a_\mu$ to emphasize its dependence on $\mu$. 

Fix an arbitrary $\alpha\in (2,\infty)$. By Lemma \ref{lemma-supersol}, we have
$$f\leq C_{\alpha,\mu}r\left(\sin \frac{\theta}{\mu}\right)^{\frac{1}{1+\alpha}}\quad\text{in }V_\mu.$$
Therefore, if $t>0$ is small, we have
$$u\geq \widetilde C_\alpha t^{1+\alpha},$$
for some positive constant $\widetilde C_\alpha$. With \eqref{u exp}, this implies
$a_\mu>0$ and $u\sim t^3$.
Therefore, for $\theta>0$ small, we have
$$f\leq Cr\left(\sin\frac{\theta}{\mu}\right)^{\frac{1}{3}},$$
and
\begin{equation}\label{g-growth rate}
h(\theta)=\frac{1}{\sqrt[3]{a_\mu}} \theta^{\frac{1}{3}}+O(\theta^{\frac23}).
\end{equation}
Next, we note
\begin{align*}
    u_t=3a_\mu t^2+O(t^3).
\end{align*}
By \eqref{2} and $u_\theta=u_tt_\theta,$
we have
\begin{align*}
    \frac{1}{\cos^2\theta}&=u_t
    \left(\frac{\sin \theta h(\theta)}{\cos ^2 \theta}+\frac{h'(\theta)}{\cos\theta}\right).
\end{align*}
With \eqref{g-growth rate}, we have
\begin{align}\label{deriv-g-growth rate}
h'(\theta)=\frac{1}{3\sqrt[3]{a_\mu}} \theta^{-\frac{2}{3}}+O(\theta^{-\frac13}).
\end{align}
We have the desired results.
\end{proof}

In Theorem \ref{thrm-Existence}, we proved the existence of solutions of $\eqref{po-MainEq}$
in $V_{\mu}$ and obtained the unique solutions of the form $f=rh_{\mu}(\theta).$
Here, $h_{\mu}(\theta)$ is a function of $\theta$ on $(0,\mu\pi)$ and
we adopt the subscript $\mu$ to indicate that $h_{\mu}$.

We now compare $h_{\mu}$ for different $\mu$.

\begin{lemma}\label{lem-sol-diff-cone} Let $\mu_1$ and $\mu_2$ be two distinct constants in $(0,1)$
and $h_{\mu_{i}}(\theta)$ be the solution of \eqref{po-MainEq} on
$(0,\mu_i\pi)$, for $i=1,2.$ 
Then, for $\mu_1<\mu_{2}<\mu_{1}+\delta(\mu_{1}),$
\begin{align}\label{eq-sol-diff-cone}
h_{\mu_{1}}(\mu_{1}\theta)\le h_{\mu_{2}}(\mu_{2}\theta)
\le C_{\mu_{1},\mu_{2}}h_{\mu_{1}}(\mu_{1}\theta)
\quad\text{for any } \theta\in (0,\pi),
\end{align}
where $\delta(\mu_1)$ and $C_{\mu_1, \mu_2}$ are positive constants given by
\begin{align*}
    \delta(\mu_{1})=\bigg(\left(\frac{1}{8b_{\mu_{1}}}+1\right)^{\frac12}-1\bigg)\mu_{1},
\end{align*}
and
\begin{align}\label{C12}
 C_{\mu_{1},\mu_{2}}=\left(1+\frac{b_{\mu_{1}}}{\mu_{1}^{2}}(\mu_{2}^{2}-\mu_{1}^{2})\right)^{\frac{1}{2}},
\end{align}
with
\begin{align}\label{eq-def-b}
    b_{\mu_{1}}=\max\left\{\frac{81}{128}\sup_{\theta\in(0,\mu_1 \pi)}g_{\mu_{1}}^4,
    \sup_{\theta\in(0,\mu_1 \pi)}\,\left[\frac{3(g_{\mu_{1}})^2+2}{-((g_{\mu_{1}})''(g_{\mu_{1}})^3+
    (g_{\mu_{1}})^4)}\right]\right\}. 
\end{align}
\end{lemma}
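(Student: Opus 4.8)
The plan is to reduce \eqref{eq-sol-diff-cone} to a one–variable comparison on the fixed interval $(0,\pi)$. For $i=1,2$ set $\eta_i(\theta):=h_{\mu_i}(\mu_i\theta)$; substituting into \eqref{po-MainEq} shows that $\eta_i>0$ on $(0,\pi)$, $\eta_i(0)=\eta_i(\pi)=0$, and
\[
L_{\mu_i}\eta_i:=(\eta_i''+\mu_i^2\eta_i)(1+\eta_i^2)\eta_i+2\bigl(\mu_i^2(1+\eta_i^2)+(\eta_i')^2\bigr)=0\quad\text{on }(0,\pi),
\]
so that $L_1=L$ and \eqref{eq-sol-diff-cone} is equivalent to $\eta_1\le\eta_2\le C_{\mu_1,\mu_2}\eta_1$ on $(0,\pi)$. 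From the equation $\eta_i''<-\mu_i^2\eta_i<0$, so each $\eta_i$ is strictly concave; since $V_{\mu_i}$ and \eqref{eq-MinGmain} are invariant under $\theta\mapsto\mu_i\pi-\theta$, uniqueness (Theorem \ref{thrm-Existence}) gives $h_{\mu_i}(\theta)=h_{\mu_i}(\mu_i\pi-\theta)$, hence $\eta_i(\theta)=\eta_i(\pi-\theta)$ and $\eta_i$ is strictly increasing on $[0,\pi/2]$. By Corollary \ref{cor-Existence}, $\eta_i(\theta)=a_{\mu_i}^{-1/3}\mu_i^{1/3}\theta^{1/3}+O(\theta^{2/3})$ as $\theta\to0$, so $\eta_2/\eta_1$ extends continuously to $[0,\pi]$. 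I will also use the two elementary identities, valid for any $g$ and any constant $c>0$ and obtained by eliminating $g''$ via $L_{\mu_1}g=0$: $L_{\mu_2}g-L_{\mu_1}g=(\mu_2^2-\mu_1^2)(1+g^2)(g^2+2)$, and
\[
L_{\mu_2}(c\,\eta_1)=-2\mu_1^2(c^2-1)(1+c^2\eta_1^2)-\frac{2c^2(c^2-1)\eta_1^2(\eta_1')^2}{1+\eta_1^2}+\sigma\,(1+c^2\eta_1^2)(2+c^2\eta_1^2),\qquad \sigma:=\mu_2^2-\mu_1^2 .
\]

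For the left inequality, note $V_{\mu_1}\subset V_{\mu_2}$, so $h_{\mu_2}$ restricted to $V_{\mu_1}$ solves \eqref{eq-MinGmain} on $V_{\mu_1}$ with boundary values $0$ on $\{\theta=0\}$ and positive on $\{\theta=\mu_1\pi\}$; the comparison principle for \eqref{eq-MinGmain} (cf.\ \cite{Lin1989Invent,HanShenWang}) gives $h_{\mu_1}\le h_{\mu_2}$ on $(0,\mu_1\pi)$. For $\theta\in(0,\pi/2]$ we have $\mu_1\theta\le\mu_2\theta\le\mu_2\pi/2$, so, using the monotonicity of $h_{\mu_2}$ on $[0,\mu_2\pi/2]$,
\[
\eta_1(\theta)=h_{\mu_1}(\mu_1\theta)\le h_{\mu_2}(\mu_1\theta)\le h_{\mu_2}(\mu_2\theta)=\eta_2(\theta);
\]
the case $\theta\in[\pi/2,\pi)$ follows from $\eta_i(\theta)=\eta_i(\pi-\theta)$. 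This proves $\eta_1\le\eta_2$, and in particular $\lim_{\theta\to0}\eta_2/\eta_1\ge(\mu_2/\mu_1)^{1/3}>1$.

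For the right inequality I show that $c\,\eta_1$ is a supersolution of $L_{\mu_2}=0$ for every $c\ge C_{\mu_1,\mu_2}$. For $c\ge1$ the middle term in the second identity is $\le0$, so $L_{\mu_2}(c\,\eta_1)\le(1+c^2\eta_1^2)\bigl(2\mu_2^2-c^2(2\mu_1^2-\sigma\eta_1^2)\bigr)$. The constraint $\mu_2<\mu_1+\delta(\mu_1)$ is exactly $C_{\mu_1,\mu_2}^2-1=\tfrac{b_{\mu_1}}{\mu_1^2}\sigma<\tfrac18$, so $C_{\mu_1,\mu_2}^2<\tfrac98$; then the definition of $b_{\mu_1}$ — the first entry binding when $\sup\eta_1$ is large, the second when it is small, with the threshold tuned by $81/128$ — forces $2\mu_1^2-\sigma\eta_1^2>0$ and $b_{\mu_1}\ge1+\tfrac{C_{\mu_1,\mu_2}^2}{2}\eta_1^2$ on $(0,\pi)$, which is precisely $2\mu_2^2\le C_{\mu_1,\mu_2}^2(2\mu_1^2-\sigma\eta_1^2)$, i.e.\ $L_{\mu_2}(C_{\mu_1,\mu_2}\eta_1)\le0$; since the factor $2\mu_2^2-c^2(2\mu_1^2-\sigma\eta_1^2)$ decreases in $c^2$, the same holds (strictly, because $C_{\mu_1,\mu_2}^2<\tfrac98$ is strict) for every $c\ge C_{\mu_1,\mu_2}$. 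Now suppose the right inequality fails, i.e.\ $C^*:=\max_{[0,\pi]}\eta_2/\eta_1>C_{\mu_1,\mu_2}$. Since the endpoint value of $\eta_2/\eta_1$ is $\le C_{\mu_1,\mu_2}$ (this uses Corollary \ref{cor-Existence} and the left inequality; see below), the maximum is attained at some interior $\theta^*$, where $\eta_2=C^*\eta_1$, $\eta_2'=C^*\eta_1'$ and $\eta_2''\le C^*\eta_1''$. Substituting into $L_{\mu_2}\eta_2(\theta^*)=0$ and using $(1+\eta_2^2)\eta_2>0$ gives $0=L_{\mu_2}\eta_2(\theta^*)\le L_{\mu_2}(C^*\eta_1)(\theta^*)<0$ (as $C^*\ge C_{\mu_1,\mu_2}$), a contradiction. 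Hence $\eta_2\le C_{\mu_1,\mu_2}\eta_1$ on $(0,\pi)$.

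The main obstacle is the sharp supersolution estimate $L_{\mu_2}(C_{\mu_1,\mu_2}\eta_1)\le0$ together with the extraction from it of both the admissible range (the constant $\delta(\mu_1)$) and the optimal factor $C_{\mu_1,\mu_2}$ — this is exactly where the explicit constants $1/8$ and $81/128$ of the statement are forced, via the pointwise inequality $b_{\mu_1}\ge1+\tfrac{C_{\mu_1,\mu_2}^2}{2}\eta_1^2$. A related subtlety is that the one–variable operator $L_\mu$ carries the term $\mu^2\eta^2(1+\eta^2)$, which behaves like a zeroth–order term of the wrong sign, so no naive maximum principle for $L_{\mu_2}$ is available; this is why the comparison is run along the quotient $\eta_2/\eta_1$, at whose interior extremum the two functions are proportional and the obstruction disappears. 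The remaining technical point — controlling $\eta_2/\eta_1$ near $\theta=0,\pi$, equivalently showing $\lim_{\theta\to0}\eta_2/\eta_1\le C_{\mu_1,\mu_2}$ so that the extremum lies in the interior — is handled by combining the expansion in Corollary \ref{cor-Existence} (which relates this limit to $(a_{\mu_1}/a_{\mu_2})^{1/3}(\mu_2/\mu_1)^{1/3}$) with the already–established lower bound and the same admissibility estimate on $b_{\mu_1}$.
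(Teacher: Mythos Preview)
Your supersolution computation is correct and actually cleaner than the paper's: the identity
\[
L_{\mu_2}(c\eta_1)=(1+c^2\eta_1^2)\bigl(2\mu_2^2-c^2(2\mu_1^2-\sigma\eta_1^2)\bigr)-\frac{2c^2(c^2-1)\eta_1^2(\eta_1')^2}{1+\eta_1^2}
\]
is right, and the reduction to $b_{\mu_1}\ge 1+\tfrac{C^2}{2}\sup\eta_1^2$ (which does follow from the two entries in \eqref{eq-def-b}, though you only sketch why) is a nice simplification of the paper's verification of its inequality~(A).

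The genuine gap is in how you pass from ``$L_{\mu_2}(c\eta_1)\le0$ for $c\ge C_{\mu_1,\mu_2}$'' to ``$\eta_2\le C_{\mu_1,\mu_2}\eta_1$''. Your touching argument needs the maximum of $\eta_2/\eta_1$ on $[0,\pi]$ to be interior, i.e.\ the endpoint limit $\lim_{\theta\to0}\eta_2/\eta_1=(a_{\mu_1}/a_{\mu_2})^{1/3}(\mu_2/\mu_1)^{1/3}$ to be $\le C_{\mu_1,\mu_2}$. But the only tool you invoke for this is the left inequality $\eta_1\le\eta_2$, which yields a \emph{lower} bound on that limit, not an upper one; and any direct bound on $a_{\mu_1}/a_{\mu_2}$ is precisely the content of Lemma~\ref{lemma-mu}, whose proof \emph{uses} the present lemma. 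So the argument, as written, is circular at exactly the point you flag as ``the remaining technical point''.

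The paper sidesteps this completely: once $\mathfrak L_{\mu_2}(C\widetilde h_{\mu_1})\le0$ (your $L_{\mu_2}(C\eta_1)\le0$) is established, it lifts back to the PDE, observing that $Crh_{\mu_1}(\tfrac{\mu_1}{\mu_2}\theta)$ is a supersolution of $Q$ in $V_{\mu_2}$, and compares with the approximating solutions $f_{\mu_2,k}$ on the \emph{bounded} truncated cones $V_{\mu_2,k}$, where $f_{\mu_2,k}=0$ on the boundary and the comparison is immediate; letting $k\to\infty$ gives $\eta_2\le C\eta_1$. No information about $a_{\mu_1}/a_{\mu_2}$ is needed. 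The same device (bounded truncation plus $k\to\infty$) is also what makes the unbounded-domain comparison in your left-inequality argument rigorous. If you replace your quotient-maximum step by this PDE comparison, the rest of your write-up goes through.
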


\begin{proof}
By \eqref{po-MainEq}, we have 
\begin{align}\label{eq-orgn}
h_{\mu_{i}}''h_{\mu_{i}}+h_{\mu_{i}}''h_{\mu_{i}}^{3}
+3h_{\mu_{i}}^2+h_{\mu_{i}}^4+2+2h_{\mu_{i}}'^{2}=0\quad\text{on }(0,\mu_i\pi).
\end{align}
For convenience, we set
\begin{align*}
\widetilde{h}_{\mu_{i}}(\theta)=h_{\mu_{i}}(\mu_{i}\theta)
\quad\text{for } \theta\in (0,\pi).
\end{align*}
Note
\begin{align*}
\widetilde{h}_{\mu_{i}}'(\theta)=\mu_ih_{\mu_{i}}'(\mu_{i}\theta),\quad
\widetilde{h}_{\mu_{i}}''(\theta)=\mu_i^2h_{\mu_{i}}''(\mu_{i}\theta)
\quad\text{for } \theta\in (0,\pi).
\end{align*}
Then, (\ref{eq-orgn}) implies
\begin{align*}
\frac{1}{\mu_{i}^2}\widetilde{h}_{\mu_{i}}''\widetilde{h}_{\mu_{i}}
+\frac{1}{\mu_{i}^2}\widetilde{h}_{\mu_{i}}''\widetilde{h}_{\mu_{i}}^{3}
+3\widetilde{h}_{\mu_{i}}^2+\widetilde{h}_{\mu_{i}}^4+2+\frac{2}{\mu_{i}^2}\widetilde{h}_{\mu_{i}}'^{2}=0
\quad\text{on }(0,\pi).\end{align*}
In view of this equation, we set
\begin{align*}
\mathfrak{L}_{\mu_{2}}h=\frac{1}{\mu_{2}^2}h''h+\frac{1}{\mu_{2}^2}h''h^{3}
+3h^2+h^4+2+\frac{2}{\mu_{2}^{2}}h'^{2}.\end{align*}

We now prove the second inequality in (\ref{eq-sol-diff-cone}).
We claim, for the positive constant $C$ as in \eqref{C12},
\begin{equation}\label{eq-claim}
\mathfrak{L}_{\mu_{2}}(C\widetilde{h}_{\mu_{1}})\leq 0\quad\text{on }(0,\pi).\end{equation}
Assuming \eqref{eq-claim}, we proceed as follows. Set $Q$ by
$$Q(h)=
\Delta h-\frac{h_{i}h_{j}h_{ij}}{1+|\nabla h|^{2}}+\frac{2}{h}.$$
Comparing $Q$ and $\mathfrak{L}_{\mu_{2}},$ we note that
\eqref{eq-claim} implies
$$Q\left(Crh_{\mu_{1}}\left(\frac{\mu_{1}}{\mu_{2}}\theta\right)\right)\leq 0\quad\text{in }V_{\mu_2}.$$
As shown in the proof of Theorem \ref{thrm-Existence},
we can take a sequence $f_{\mu_{2},k}$ such that 
$$Q(f_{\mu_{2},k})=0\quad\text{in }V_{\mu_{2},k} ,$$ and
$$f_{\mu_{2},k}=0\quad\text{on }\partial V_{\mu_{2},k}. $$
Then, $f_{\mu_2, k}\to f_{\mu_{2}}=rh_{\mu_{2}}(\theta)$ as $k\to \infty$ and, 
by the maximum principle,
$$f_{\mu_{2},k}\leq Crh_{\mu_{1}}\left(\frac{\mu_{1}}{\mu_{2}}\theta\right)\quad\text{in }V_{\mu_{2},k}.$$
Letting $k\to\infty$, we obtain
$$rh_{\mu_{2}}(\theta)=f_{\mu_{2}}\leq Crh_{\mu_{1}}\left(\frac{\mu_{1}}{\mu_{2}}\theta\right)
\quad\text{in }V_{\mu_{2}}.$$
This is the desired conclusion.

Now, we proceed to prove \eqref{eq-claim}.
Note
\begin{align*}
\mathfrak{L}_{\mu_{2}}(C\widetilde{h}_{\mu_{1}})(\theta)&
=\bigg[C^2\frac{\mu_{1}^2}{\mu_{2}^2}h_{\mu_{1}}''h_{\mu_{1}}
   +C^4\frac{\mu_{1}^2}{\mu_{2}^2}h_{\mu_{1}}''hg_{\mu_{1}}^{3}\\
&\qquad
+3C^2h_{\mu_{1}}^2+C^4h_{\mu_{1}}^4
+2+2C^2\frac{\mu_{1}^2}{\mu_{2}^2}h_{\mu_{1}}'^{2}\bigg](\mu_{1}\theta).
\end{align*}
Set $$a=\frac{\mu_{2}}{\mu_{1}}.$$ By \eqref{eq-orgn} with $i=1$,
we have
\begin{align*}
 \mathfrak{L}_{\mu_{2}}(C\widetilde{h}_{\mu_{1}})&=
 \frac{C^2}{a^2}(-(h_{\mu_{1}}''h_{\mu_{1}}^3+h_{\mu_{1}}^4))\cdot\bigg\{-(C^2-1)+
\bigg [3(a^2-1)h_{\mu_{1}}^2\\
&\qquad+C^2(a^2-1)h_{\mu_{1}}^4+\frac{2}{C^2}(a^2-1)+\frac{2}{C^2}-2\bigg]
 \frac{1}{-(h_{\mu_{1}}''h_{\mu_{1}}^3+h_{\mu_{1}}^4)}\bigg\}.
\end{align*}
By  \eqref{eq-orgn} again, we have
$$h_{\mu_{1}}''+h_{\mu_{1}}<0.$$
To prove \eqref{eq-claim}, it is equivalent to verify
\begin{align}\label{A}\begin{split}
C^2-1&\geq
\bigg[3(a^2-1)h_{\mu_{1}}^2+C^2(a^2-1)h_{\mu_{1}}^4 +\frac{2}{C^2}(a^2-1)+\frac{2}{C^2}-2\bigg]\\
&\qquad\cdot \frac{1}{-(h_{\mu_{1}}''h_{\mu_{1}}^3+h_{\mu_{1}}^4)},
\end{split}
\end{align} with $C=C_{\mu_{1},\mu_{2}}$ as in \eqref{C12}.
First, \eqref{C12} implies
$$C^{2}=1+b_{\mu_{1}}(a^2-1).$$
By $\mu_{2}\in(\mu_{1},\mu_{1}+\delta(\mu_{1}))$ and the definition of $\delta(\mu_{1}),$
we have
$$C^4\leq(\frac{9}{8})^2.$$
Then, using the definition of $b_{\mu_{1}}$ in \eqref{eq-def-b}, we get
\begin{align*}
C^2(a^2-1)h_{\mu_{1}}^4 \leq C^2(a^2-1)\max(h_{\mu_{1}}^4) 
\leq\frac{2b_{\mu_{1}}(a^2-1)}{C^2}=-(\frac{2}{C^2}-2).
\end{align*}
Hence,
$$C^2(a^2-1)h_{\mu_{1}}^4 +(\frac{2}{C^2}-2)\leq 0.$$
Now we verify
\begin{align*}
C^2-1\geq(3h_{\mu_{1}}^2+\frac{2}{C^2})(a^2-1)
 \frac{1}{-(h_{\mu_{1}}''h_{\mu_{1}}^3+h_{\mu_{1}}^4)}.
\end{align*}
Note that ${2}/{C^2}\leq 2$ and $[-(h_{\mu_{1}}''h_{\mu_{1}}^3+h_{\mu_{1}}^4)]^{-1}$ is bounded.
By the definition of $b_{\mu_{1}},$ we have
$$C^{2}-1=b_{\mu_{1}}(a^2-1)\geq (3h_{\mu_{1}}^2+\frac{2}{C^2})(a^2-1)
 \frac{1}{-(h_{\mu_{1}}''h_{\mu_{1}}^3+h_{\mu_{1}}^4)}.$$
This ends the proof of \eqref{eq-claim}.

Next, we prove the first inequality in (\ref{eq-sol-diff-cone}).
We aim to verify
$\mathfrak{L}_{\mu_{2}}(\widetilde{h}_{\mu_{1}})\geq 0$
and then proceed similarly as in the first part of the present proof.
By the earlier calculation  and (\ref{eq-orgn}), we have
\begin{align*} \mathfrak{L}_{\mu_{2}}(\widetilde{h}_{\mu_{1}})(\theta)&
=\bigg[\frac{\mu_{1}^2}{\mu_{2}^2}h_{\mu_{1}}''h_{\mu_{1}}
+\frac{\mu_{1}^2}{\mu_{2}^2}h_{\mu_{1}}''h_{\mu_{1}}^{3}
+3h_{\mu_{1}}^2+h_{\mu_{1}}^4+2+2\frac{\mu_{1}^2}{\mu_{2}^2}h_{\mu_{1}}'^{2}\bigg](\mu_{1}\theta)
\\
&=\bigg[\bigg(1-\frac{\mu_{1}^2}{\mu_{2}^2}\bigg)
(3h_{\mu_{1}}^2+h_{\mu_{1}}^4+2)\bigg](\mu_{1}\theta)\geq 0,
\end{align*}
where we used $\mu_1<\mu_2$ in the last inequality.
\end{proof}

We now compare $a_\mu$ in \eqref{u exp} for different $\mu.$ 

\begin{lemma}\label{lemma-mu} Let $\mu_1$ and $\mu_2$ be two distinct constants in $(0,1)$
and $a_{\mu_i}$ be defined as in \eqref{u exp} for $\mu=\mu_i$, $i=1, 2$. 
Then, for $\mu_1<\mu_{2}<\mu_{1}+\delta(\mu_{1}),$
\begin{align*}
\frac{\mu_2}{\mu_1}C_{\mu_1,\mu_2}^{-3}a_{\mu_1}\le 
a_{\mu_2}\le     \frac{\mu_2}{\mu_1}a_{\mu_1},
\end{align*}
where $\delta(\mu_1)$ and $C_{\mu_1,\mu_2}$ are  determined in Lemma \ref{lem-sol-diff-cone}.
\end{lemma}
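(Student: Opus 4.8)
The plan is to identify the constant $a_\mu$ with a limit of $h_\mu$ at the vertex and then push the two-sided comparison of Lemma \ref{lem-sol-diff-cone} through this identification. By the expansion \eqref{g-growth rate} established in Corollary \ref{cor-Existence}, we have $h_\mu(\theta)=a_\mu^{-1/3}\theta^{1/3}+O(\theta^{2/3})$ as $\theta\to0^+$, so that
$$\lim_{\theta\to0^+}\theta^{-1/3}h_\mu(\theta)=a_\mu^{-1/3},$$
and Corollary \ref{cor-Existence} also guarantees this limit is a finite positive number. Thus $a_\mu$ is recovered from the leading boundary behavior of $h_\mu$, and the lemma becomes an elementary statement about these limits.

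Replacing $\theta$ by $\mu_i\theta$ in the previous display gives, for $i=1,2$,
$$h_{\mu_i}(\mu_i\theta)=a_{\mu_i}^{-1/3}\mu_i^{1/3}\theta^{1/3}+O(\theta^{2/3})\qquad\text{as }\theta\to0^+.$$
Since $\mu_1<\mu_2<\mu_1+\delta(\mu_1)$, Lemma \ref{lem-sol-diff-cone} applies and yields, for every $\theta\in(0,\pi)$,
$$h_{\mu_1}(\mu_1\theta)\le h_{\mu_2}(\mu_2\theta)\le C_{\mu_1,\mu_2}\,h_{\mu_1}(\mu_1\theta).$$
Dividing by $\theta^{1/3}$ and letting $\theta\to0^+$, the expansions above turn these two inequalities into
$$\mu_1^{1/3}a_{\mu_1}^{-1/3}\le\mu_2^{1/3}a_{\mu_2}^{-1/3}\le C_{\mu_1,\mu_2}\,\mu_1^{1/3}a_{\mu_1}^{-1/3}.$$
All quantities being positive, I cube and rearrange: the left inequality gives $a_{\mu_2}\le(\mu_2/\mu_1)a_{\mu_1}$, and the right one gives $a_{\mu_2}\ge(\mu_2/\mu_1)C_{\mu_1,\mu_2}^{-3}a_{\mu_1}$, which together are exactly the asserted bound.

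There is no genuine difficulty in the argument: the substance sits in Corollary \ref{cor-Existence} (existence and positivity of the limit defining $a_\mu$) and in Lemma \ref{lem-sol-diff-cone} (the comparison valid precisely in the range $\mu_1<\mu_2<\mu_1+\delta(\mu_1)$), both of which are already in hand. The only point deserving a line of care is that one may pass to the limit in each of the two displayed inequalities separately; this is immediate because each $h_{\mu_i}$ is a single fixed function carrying the stated asymptotic expansion, so the $O(\theta^{2/3})$ remainders disappear after division by $\theta^{1/3}$.
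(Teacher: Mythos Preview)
Your proof is correct and follows essentially the same approach as the paper: both arguments combine the vertex asymptotics of $h_\mu$ encoded in $a_\mu$ with the two-sided comparison of Lemma~\ref{lem-sol-diff-cone} and pass to the limit $\theta\to0^+$. The only cosmetic difference is that you invoke the expansion \eqref{g-growth rate} from Corollary~\ref{cor-Existence} directly, whereas the paper rewrites the equivalent relation \eqref{u exp} before taking the same limit.
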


\begin{proof} By \eqref{u exp}, we have 
\begin{align*}
    \tan \theta &=a_{\mu_{1}}\left(\frac{h_{\mu_{1}}(\theta)}{\cos\theta}\right)^3
    +O\bigg(\left(\frac{h_{\mu_{1}}(\theta)}{\cos\theta}\right)^4\bigg),\\
     \tan \theta' &=a_{\mu_{2}}\left(\frac{h_{\mu_{2}}(\theta')}{\cos\theta'}\right)^3
    +O\bigg(\left(\frac{h_{\mu_{2}}(\theta')}{\cos\theta'}\right)^4\bigg).
\end{align*}
Take $\theta'=\frac{\mu_2}{\mu_1}\theta.$
Then, 
\begin{align*}
\lim_{\theta\rightarrow0} 
\frac{a_{\mu_{1}}h_{\mu_{1}}^3(\theta)}{a_{\mu_{2}}h_{\mu_{2}}^3\big(\frac{\mu_2}{\mu_1}\theta\big)}
=\frac{\mu_1}{\mu_2}.
\end{align*}
By Lemma \ref{lem-sol-diff-cone}, for any $\mu_2\in (\mu_1,\mu_1+\delta(\mu_1)), $ we have
\begin{align*}
h_{\mu_{1}}(\theta)\le h_{\mu_{2}}\left(\frac{\mu_2}{\mu_1}\theta\right)
\le  C_{\mu_{1},\mu_{2}}h_{\mu_{1}}(\theta).
\end{align*}
This implies the desired result by letting $\theta\to0$. 
\end{proof}

We conclude this section with a remark on $a_\mu$. 
We note that $a_\mu$ is defined in \eqref{u exp}. In fact, it can be computed by $h_\mu$ directly
as follows: 
$$\lim_{\theta\to0}\theta^{-\frac13}h_\mu(\theta)=\frac{1}{\sqrt[3]{a_\mu}}.$$
This is implied by \eqref{g-growth rate}. 


\section{Local Asymptotic Expansions}\label{sec-Localization}

In this section, we prove that asymptotic expansions near singular boundary points up to certain orders 
are local properties. 

\begin{lemma}\label{localization}
Let $\Omega$ and $\Omega_*$ be two convex domains in $\mathbb R^2$ such that, for 
some $x_0\in \partial\Omega$ and some $R_0>0$, 
$$\Omega\bigcap B_{R_0}(x_0)=\Omega_*\bigcap B_{R_0}(x_0),$$
and that $\partial\Omega\cap B_{R_0}(x_0)$ consists of two $C^{1,1}$-curves
$\sigma_1$, $\sigma_2$ intersecting at $x_0$  
with the angle between the tangent lines of $\sigma_1$ and $\sigma_2$
given by $\mu \pi,$ for some $\mu\in(0,1).$
Suppose that $f$ and $f_*$ are solutions of \eqref{eq-MinGmain} 
for $\Omega$ and $\Omega_*$, respectively. 
Then, for some $\tau\in (0,1)$,
\begin{align}\label{u1-u2}
|f(x)-f_*(x)|\leq C f(x)\left(\frac{|x-x_0|}{r_0}\right)^{\tau}\quad\text{for any }x\in \Omega\cap B_{r_0}(x_0), 
\end{align}
where $r_0$ and $C$ are positive constants depending only 
on $R$, $\mu$ and the $C^{1,1}$-norms of $\sigma_1$ and $\sigma_2$ in $B_{R_0}(x_0)$.
\end{lemma}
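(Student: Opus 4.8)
The plan is to exploit the fact that $f$ and $f_*$ agree near $x_0$ only through the domains, not as functions, and to propagate the coincidence inward by a maximum-principle comparison on the annular region $\Omega\cap (B_{R_0}(x_0)\setminus B_{r_0}(x_0))$. Without loss of generality take $x_0=0$. The first step is to record two basic a priori facts about such solutions: a global H\"older bound $f(x)\le C|x|^{\tau_0}$ near the vertex, which follows from the $C^{1/(n+1)}$-estimate of \cite{HanShenWang} quoted in the introduction (here $n=2$, so the exponent is $1/3$), and a matching lower bound $f(x)\ge c\,d(x)^{1/2}\gtrsim c|x|^{\gamma}$ on cone-like subregions, obtained by comparing $f$ from below with the solution in an interior tangent ball (whose boundary has positive mean curvature, so \eqref{eq-C^2BasicExpansion} applies to give the $d^{1/2}$ behavior) — the convexity of $\Omega$ and positivity of the opening angle $\mu\pi$ guarantee an interior tangent ball of a definite radius at every nearby boundary point. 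These two bounds, together with the fact that $f$ and $f_*$ coincide exactly on $\partial\Omega\cap B_{R_0}$ (both vanish there, and a neighborhood of that arc inside $\Omega$ is common to the two domains), set up the comparison.

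The second step is the comparison itself. On the common region $\Omega\cap B_{R_0}$ both $f$ and $f_*$ solve \eqref{eq-MinGmain}; I want to bound $f-f_*$ there. The trick, standard for this equation, is to use the quotient $w=f_*/f$ rather than the difference: since the graph of $f$ is minimal in $\mathbb H^{3}$ and the equation is the minimal-surface equation in the conformally flat metric $x_3^{-2}(dx^2+dx_3^2)$, one shows $w$ satisfies a linear second-order elliptic equation \emph{without} zeroth-order term on $\Omega\cap B_{R_0}$ (this is the linearization structure Lin uses; the singular term $n/f$ cancels in the ratio). Hence $w-1$ obeys the maximum principle. On $\partial(\Omega\cap B_{r_0/\,\cdot\,})$ split the boundary: on the portion lying in $\partial\Omega$ we have $w=1$ by the boundary expansion (both $f,f_*\sim (H/2d)^{-1/2}$ with the \emph{same} $H$, since the domains literally coincide there), while on the arc $\Omega\cap\partial B_{\rho}$ for $r_0\le\rho\le R_0/2$ we only know $|w-1|\le C$ from the upper and lower bounds of Step 1. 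A barrier argument — comparing $w-1$ with a harmonic-type function that is $O(1)$ on the outer arc and vanishes on $\partial\Omega$, e.g. a suitable power of a defining function for the two curves, which decays like $(|x|/r_0)^{\tau}$ as one moves inward — then yields $|w(x)-1|\le C(|x|/r_0)^{\tau}$ for $x\in\Omega\cap B_{r_0}$, which is exactly \eqref{u1-u2} after multiplying by $f(x)$.

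Carrying this out cleanly requires care at the vertex, where the linear equation for $w$ degenerates (its coefficients blow up like $f^{-2}\sim|x|^{-2/3}$ and the region pinches). The safe route is to run the barrier comparison not on all of $\Omega\cap B_{r_0}$ at once but on dyadic pieces $\Omega\cap(B_{2^{-k}r_0}\setminus B_{2^{-k-1}r_0})$, where $f$ is comparable to a constant times $2^{-k\gamma}$ (two-sided, by Step 1) and the geometry is uniformly a scaled copy of a fixed configuration; on each piece the comparison function loses a fixed factor $\lambda<1$ over the portion of $\partial\Omega$ it must vanish on, and iterating $k$ times gives the geometric decay $\lambda^k\sim(|x|/r_0)^{\tau}$ with $\tau=\log(1/\lambda)/\log 2$. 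The scaling $f_{\mu,k}(x)=k f_{\mu,1}(x/k)$ and the interior gradient estimates \eqref{fk grad estimate}–\eqref{fk int estimate} established in the proof of Theorem \ref{thrm-Existence} provide the uniform control on each annulus needed to make the per-step constants genuinely independent of $k$.

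\textbf{Main obstacle.} The delicate point is Step 2's claim that $w=f_*/f$ satisfies a \emph{linear} equation with no zeroth-order term up to the boundary, and that $w\to 1$ on $\partial\Omega\cap B_{R_0}$ at a rate good enough to feed the barrier — in other words, transferring the boundary regularity \eqref{eq-C^2BasicExpansion} (which is stated for smooth domains with $H>0$) to the two $C^{2,\alpha}$ pieces $\sigma_1,\sigma_2$ away from the vertex, with constants that do not degenerate as one approaches $0$. This is where the positivity of the curvatures $\kappa_i$ is essential: it gives a uniform lower bound on the mean curvature $H_{\partial\Omega}$ along $\sigma_i\cap B_{r_0}$, hence uniform two-sided control $f\asymp (d/H)^{1/2}$ near those arcs, and it is what prevents the comparison from being swamped near the vertex. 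The rest — the maximum principle, the dyadic iteration, the harmonic barrier — is routine once that uniform boundary behavior is in hand.
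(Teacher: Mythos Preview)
Your approach diverges substantially from the paper's, and it has two genuine gaps.

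\textbf{First gap: the quotient equation.} The claim that $w=f_*/f$ satisfies a linear second-order elliptic equation \emph{without zeroth-order term} is not correct for this problem. The operator in \eqref{eq-MinGmain} is quasilinear, with principal coefficients $a_{ij}(\nabla f)=\delta_{ij}-f_if_j/(1+|\nabla f|^2)$ depending on the gradient, plus the singular term $n/f$. If you write $f_*=wf$ and substitute, the gradient terms of $w$ enter $a_{ij}(\nabla f_*)$ nonlinearly, and no clean cancellation occurs. What \emph{is} true is that the difference $v=f-f_*$ satisfies a linear equation $\tilde a_{ij}v_{ij}+b_iv_i-\tfrac{n}{ff_*}v=0$, but now the zeroth-order coefficient $-n/(ff_*)$ is singular on $\partial\Omega$ (it blows up like $d^{-1}$ or worse near the corner). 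You cannot run a bare maximum principle on $w-1$; you would still need to build explicit barriers that absorb this singularity, and you have not done so.

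\textbf{Second gap: the hypotheses.} Your ``Main obstacle'' paragraph invokes the positivity of the curvatures $\kappa_i$ and the $C^{2,\alpha}$ expansion \eqref{eq-C^2BasicExpansion} along the arcs $\sigma_i$. But Lemma \ref{localization} assumes only that $\sigma_1,\sigma_2$ are $C^{1,1}$; no sign condition on curvature is imposed, and \eqref{eq-C^2BasicExpansion} is not available at that regularity. So your proposed resolution of the obstacle is based on hypotheses the lemma does not grant. Relatedly, the dyadic claim that ``the geometry is uniformly a scaled copy of a fixed configuration'' is false: the sides are genuinely curved, not conical, and the deviation from the tangent cone is exactly what must be controlled.

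\textbf{What the paper does instead.} The paper never linearizes. It splits $\Omega\cap B_{r_0}$ into two regions according to whether $d\ge |x-x_0|^{3/2}$ or $d\le |x-x_0|^{3/2}$. In the first region it sandwiches $f$ between the cone solution $f_\mu$ (from above, by convexity and the maximum principle) and the solution on an inscribed lens $B_R(x_0+R\nu_1)\cap B_R(x_0+R\nu_2)$ (from below); the latter is compared to $f_\mu$ via the explicit hyperbolic isometry $T_L$ of \eqref{eq-T}, which maps the lens to a cone. This yields $f(x)=|x-x_0|h_\mu(\theta)(1+O(|x-x_0|^{1/2}))$, and the same for $f_*$, hence \eqref{u1-u2} there. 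In the second region the paper constructs by hand the comparison function $g=f(1+f^\alpha-C|x|^\alpha)$, with $\alpha$ small and $C\sim r_0^{-\alpha}$, and verifies by a direct (and lengthy) computation that $\widetilde Q(g)\ge 0$ there, using the concavity of $f$ and the smallness of $f/|x|$ forced by $d\le|x|^{3/2}$. The maximum principle then gives $f_*\ge g$, i.e.\ $f_*/f\ge 1-C|x|^\alpha$, and symmetry gives the other inequality. No positive-curvature assumption and no $C^{2,\alpha}$ expansion is used; only the $C^{1,1}$ interior-ball property enters, in building the inscribed lens.
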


\begin{proof} 
Set $\nu_i$ to be the unit inner normal vector to $\sigma_i$ at $x_0$, for $i=1,2.$
Note $\Omega\subseteq V_{x_{0}}$ since $\Omega$ is convex.
By the maximum principle, we have, for any $x\in \Omega,$
\begin{align}\label{1}
   f(x)\leq f_\mu(x)=|x-x_0| h_\mu (\theta),
\end{align}
where $f_\mu$ is the solution of \eqref{eq-MinGmain} for $\Omega= V_{x_{0}}.$
We consider two cases. 

{\it Case 1}. We first prove (\ref{u1-u2}) in the region
$\{d\geq |x-x_0|^{\frac{3}{2}}\}$.

Since both $\sigma_1$ and $\sigma_2$ are $C^{1,1},$
there exists a positive constant $R$, depending only on $R_0$, $\mu$, and 
the $C^{1,1}$-norms of $\sigma_1$ and $\sigma_2$ in $B_{R_0}(x_0)$, 
such that
$$\widetilde\Omega\equiv B_{R}(x_0+R\nu_1)\bigcap B_{R}(x_0+R\nu_2)\subseteq \Omega.$$
Let $\widetilde f$ be the solution of \eqref{eq-MinGmain} 
for $\widetilde\Omega$. The maximum principle implies 
\begin{equation}\label{eq-LowerBound}
f\ge \widetilde f\quad\text{in }\widetilde\Omega.\end{equation}
We note that the tangent cone of $\Omega$ at $x_0$ is also the tangent 
cone of $\widetilde\Omega$ at $a_0$. 
It is easy to see that $\partial B_{R}(x_0+R\nu_1)$ and $\partial B_{R}(x_0+R\nu_2)$
intersect at two points, one of which is $x_0$ and another denoted by  $q.$
A simple calculation yields
$$| x_0q|=2R\sin \frac{\mu \pi}{2}.$$
Set $L=R\sin  \frac{\mu \pi}{2}$. For convenience, we assume
\begin{align*}
  x_0=(-L,0),\quad
  q=(L,0).
\end{align*}
We consider the map $T_{L}$ introduced in \eqref{eq-T}.
Then, $T_{L}$ maps the minimal  surface
$\{(x,\widetilde f(x))\}$ in $\mathds{H}^3$ to
the minimal surface $\{(y, \widetilde{f}_\mu(y))\}$ in  $\mathds{H}^3$ and
maps conformally $\Omega_0$ 
to an infinite cone $\widetilde{V}$,
which conjugates  $V_{x_{0}}.$ 
Note
\begin{align*}
\widetilde{V}=V_{x_{0}}+\frac12{\overrightarrow{x_{0}q}}.
\end{align*}
By \eqref{eq-T} and \eqref{1}, we have
\begin{align*}
    JT_{L}|_{(x_0, 0)}=\frac{1}{2}I_{3\times3},
\end{align*}
and, for $|x-x_0|$ small,
\begin{align*}
y_1&=\frac{1}{2}(x_1+L)
+O(|x-x_{0}|^2),
\\y_2&=\left(\frac{1}{2}+O(|x-x_{0}|)\right)x_2,\end{align*}
and 
\begin{align*}
\widetilde{f}_\mu(y)=\left(\frac{1}{2}+O(|x-x_{0}|)\right)\widetilde f(x).
\end{align*}
Corollary \ref{cor-Existence} implies, for
$\theta\in [-\frac12\mu \pi+\frac{\delta\pi}{2},\frac12\mu\pi-\frac{\delta\pi}{2}],$
$$|h_\mu'(\theta)|\leq C_* \delta^{-\frac{2}{3}},$$
where $C_*$ is some positive constant depending only on $\mu.$
With $\widetilde {f}_\mu(y)=|y|h_\mu(\theta)$, we obtain
$$|\nabla \widetilde{f}_\mu|\leq C\delta^{-\frac{2}{3}} \quad\text{for }
\theta\in  \left[-\frac12\mu \pi+\frac{\delta\pi}{2},\frac12\mu\pi-\frac{\delta\pi}{2}\right].$$
If $|x-x_0|$ is small and $d \geq|x-x_0|^{\frac{3}{2}}$, then the angle $\delta$ between 
$\overline{xx_0}$ and $l_i$ is greater than $|x-x_0|^{\frac{1}{2}}/2,$ for $i=1,2$, 
where $l_i$ is the tangent line of $\sigma_i$ at $x_0$. 
By \eqref{g-growth rate} and \eqref{deriv-g-growth rate}, we have
\begin{align*}
\widetilde{f}_\mu(y)&= \widetilde{f}_\mu\left(\frac{1}{2}(x_1+L),\frac{1}{2}x_2\right)
+\delta^{-\frac23}O(|x-x_0|^{2})
\\&=\frac{1}{2}\widetilde{f}_\mu(x_1+L,x_2)
+O(|x-x_0|^{\frac{5}{3}})\\&=\frac{1}{2}|x-x_0|h_\mu(\theta)(1+O(|x-x_0|^{\frac{1}{2}})).
\end{align*}
Therefore, in the region $\{d\geq |x-x_0|^{\frac{3}{2}}\}$, we have
\begin{align*}
    \widetilde f(x)=|x-x_0|h_\mu(\theta)(1+O(|x-x_0|^{\frac{1}{2}})).
\end{align*}
By combining with \eqref{eq-LowerBound}, we get, 
for any $x$ with small $|x-x_0|$ and $d \geq|x-x_0|^{\frac{3}{2}}$, 
\begin{align}\label{eq-Lower}
f(x)\ge|x-x_0|h_\mu(\theta)(1+O(|x-x_0|^{\frac{1}{2}})).
\end{align}
By \eqref{1} and \eqref{eq-Lower}, we obtain, for such $x$,  
\begin{equation}\label{eq-Estimate-Both}
|x-x_0|h_\mu(\theta)(1+O(|x-x_0|^{\frac{1}{2}}))\le f(x)\le |x-x_0|h_\mu(\theta).\end{equation}
Similar estimates also hold for $f_*$. 
Hence, for such $x$, 
\begin{align}\label{eq-Step1}
    |f(x)-f_*(x)|\leq C_0f(x)|x-x_0|^{\frac{1}{2}}.
\end{align}

{\it Case 2.} Next, we prove (\ref{u1-u2}) in the region
$\{d\leq |x-x_0|^{\frac{3}{2}}\}$.
In the following, we assume $x_0=0$. 
We will prove there exist a constant $C$ sufficiently large 
and two constants $\alpha,r_0$ sufficiently small such that
\begin{align}\label{u1 u2}
f_*\geq f(1+f^{\alpha}-C |x|^{\alpha})
\end{align}
in 
\begin{align}\label{near bdry}
\Omega_0\equiv
\{d\leq |x|^{\frac{3}{2}}\}\bigcap\{1+f^{\alpha}-C |x|^{\alpha}\geq 0\}\bigcap B_{r_0}\bigcap\Omega.
\end{align}
Let 
\begin{equation}\label{eq-choice}C=\frac{1}{r_0^\alpha}+1,\quad \alpha=\frac{1}{100}.\end{equation}
We can take $r_{0}$ small enough such that $$\frac{1}{r_0^\alpha}\geq 2C_{0},$$
where $C_0$ is the constant as in \eqref{eq-Step1}. 
We set 
$$\widetilde{Q}(w)=w(1+|\nabla w|^2)\left(\Delta w-\frac{w_iw_jw_{ij}}{1+|\nabla w|^2}+\frac{n}{w}\right),$$
or 
\begin{align*}
\widetilde{Q}(w)=w\Delta w+w(|\nabla w|^2\Delta w-w_iw_jw_{ij})
+n+n|\nabla w|^2.
\end{align*}
We claim 
\begin{equation}\label{eq-domain}
\widetilde{Q}(f(1+f^{\alpha}-C |x|^{\alpha}))\geq 0=\widetilde{Q}(f_*)\quad\text{in }\Omega_0\end{equation}
and 
\begin{equation}\label{eq-boundary}
f_*\geq f(1+f^{\alpha}-C |x|^{\alpha})\quad\text{on }\partial\Omega_0.\end{equation}
Then, the maximum principle implies \eqref{u1 u2}. 

We first prove \eqref{eq-boundary}. Since $\Omega$ is convex, 
$\Omega$ is in the tangent cone of $\Omega$ at 0. 
By \eqref{1}, we have
\begin{equation*}\label{eq-Estimate-f}
f\leq C_{\mu}|x|\left(\sin\frac{\theta}{\mu}\right)^{\frac{1}{3}}.\end{equation*}
A simple geometric argument 
yields 
\begin{align*}
f\leq C_{\mu}|x|\cdot \left(\frac{d+C|x|^2}{|x|}\right)^{\frac{1}{3}},
\end{align*}
and hence,
\begin{align}\label{r divide u}
\frac{f}{|x|}\leq C_{\mu}|x|^{\frac{1}{6}}\quad\text{in }\Omega_0.
\end{align}
Note that, for $r_{0}$ small,
\begin{align}\label{h-size}
f^{\alpha}\leq \frac{|x|^{\alpha}}{100}.
\end{align}
Therefore, by \eqref{eq-Step1} and \eqref{h-size}, we have \eqref{eq-boundary}. 
We note that we need to discuss $|x|=r_0$ and $d=|x|^{\frac32}$ separately.

Now we proceed to prove \eqref{eq-domain}. We will do this for general $n$
under the conditions \eqref{r divide u} and \eqref{h-size}.  Set 
\begin{align*}
h_0&=1+f^{\alpha}-C|x|^{\alpha},\\
h&=1+(1+\alpha)f^{\alpha}-C|x|^{\alpha},\end{align*}
and 
$$g=fh_0.$$
Then, 
\begin{align*}
g_i=f_ih-C\alpha |x|^{\alpha-2}fx_i,\end{align*}
and 
\begin{align*}
g_{ij}&=f_{ij}h+\alpha(1+\alpha)f^{\alpha-1}f_if_j-C\alpha|x|^{\alpha-2}(f_ix_j+f_jx_i)\\
&\qquad -C\alpha(\alpha-2)|x|^{\alpha-4}fx_ix_j-C\alpha|x|^{\alpha-2}f\delta_{ij}.
\end{align*}
Hence, 
$$|\nabla g|^2=|\nabla f|^2h^2-2C\alpha|x|^{\alpha-2}fh(x\cdot\nabla f)
+C^2\alpha^2|x|^{2\alpha-2}f^2,$$
and 
$$\Delta g=\Delta fh+\alpha(1+\alpha)f^{\alpha-1}|\nabla f|^2-2C\alpha|x|^{\alpha-2}(x\cdot\nabla f)
-C\alpha(\alpha-2+n)|x|^{\alpha-2}f.$$
Next, a straightforward calculation yields 
\begin{align*}
|\nabla g|^2\Delta g-g_ig_jg_{ij}
&=(|\nabla f|^2\Delta f-f_if_jf_{ij})h^3\\
&\qquad -2C\alpha|x|^{\alpha-2}fh^2[(x\cdot\nabla f)\Delta f-x_if_jf_{ij}]\\
&\qquad +C^2\alpha^2|x|^{2\alpha-4}f^2h[|x|^2\Delta f-x_ix_jf_{ij}]\\
&\qquad +C\alpha(2-\alpha)|x|^{\alpha-4}fh^2[|x|^2|\nabla f|^2-(x\cdot \nabla f)^2]\\
&\qquad +C^2\alpha^3(1+\alpha)|x|^{2\alpha-4}f^{\alpha+1}[|x|^2|\nabla f|^2-(x\cdot \nabla f)^2]\\
&\qquad -2C^2\alpha^2|x|^{2\alpha-4}fh[|x|^2|\nabla f|^2-(x\cdot \nabla f)^2]\\
&\qquad -C\alpha(n-1)|x|^{\alpha-2}fh^2|\nabla f|^2\\
&\qquad +2C^2\alpha^2(n-1)|x|^{2\alpha-4}f^2h(x\cdot\nabla f)
-C^3\alpha^3(n-1)|x|^{3\alpha-4}f^3.
\end{align*}
Then, we can write $\widetilde Q(g)$ as 
$$\widetilde Q(g)=I+II+III,$$
where 
\begin{align*}
I&=f\Delta fh_0h+f(|\nabla f|^2\Delta f-f_if_jf_{ij})h^3h_0+
n+n|\nabla f|^2h^2,\\
II
&=-2C\alpha|x|^{\alpha-2}f^2h^2h_0[(x\cdot\nabla f)\Delta f-x_if_jf_{ij}]\\
&\qquad +C^2\alpha^2|x|^{2\alpha-4}f^3hh_0[|x|^2\Delta f-x_ix_jf_{ij}]\\
&\qquad +2C^2\alpha^2|x|^{2\alpha-4}f^2hh_0[(x\cdot \nabla f)^2-|x|^2|\nabla f|^2]\\
&\qquad -C\alpha(n-1)|x|^{\alpha-2}f^2h^2h_0|\nabla f|^2\\
&\qquad +2C^2\alpha^2(n-1)|x|^{2\alpha-4}f^3hh_0(x\cdot\nabla f)\\
&\qquad-2C\alpha|x|^{\alpha-2}fh_0(x\cdot \nabla f)
-2nC\alpha|x|^{\alpha-2}fh(x\cdot \nabla f)\\
&\qquad-C^3\alpha^3(n-1)|x|^{3\alpha-4}f^4h_0
-C\alpha(\alpha-2+n)|x|^{\alpha-2}f^2h_0, 
\end{align*}
and 
\begin{align*}
III
&=C\alpha(2-\alpha)|x|^{\alpha-4}f^2h^2h_0[|x|^2|\nabla f|^2-(x\cdot \nabla f)^2]\\
&\qquad +C^2\alpha^3(1+\alpha)|x|^{2\alpha-4}f^{\alpha+2}h_0[|x|^2|\nabla f|^2-(x\cdot \nabla f)^2]\\
&\qquad +\alpha(1+\alpha)f^\alpha h_0|\nabla f|^2+nC^2\alpha^2|x|^{2\alpha-2}f^2.
\end{align*}
First, we note that $III\ge 0$. Next by $\widetilde{Q}(f)=0,$ we have, 
\begin{align}\label{u12=0}
 f\Delta f+f(|\nabla f|^2\Delta f-f_if_jf_{ij})+n+n|\nabla f|^2=0.
\end{align} Then, 
\begin{align*}
I&= f\Delta fh_0h+f(|\nabla f|^2\Delta f-f_if_jf_{ij})h^3h_0+
   n+n|\nabla f|^2h^2\\
&=-(hh_0-h^3h_0)f(|\nabla f|^2\Delta f-f_if_jf_{ij})
+n(h^2-hh_0)|\nabla f|^2+n(1-hh_0)\\
&=-hh_0(1-h^2)f(|\nabla f|^2\Delta f-f_if_jf_{ij})
+n\alpha f^\alpha h|\nabla f|^2+n(1-hh_0).
\end{align*}
By \eqref{h-size}, we have 
$$
0\leq h_0<h\leq 1\quad\text{in }\Omega_0,$$
and 
\begin{align*}1-h^2\geq \frac{99}{100}C|x|^\alpha,\quad
1-hh_0\geq \frac{99}{100}C|x|^\alpha\quad\text{in }\Omega_0.
\end{align*}
We note that $\widetilde{Q}(w)$ is invariant under orthogonal transforms. 
Fix a point $p\in\Omega_0$ and assume, by a rotation, 
that $f_{ij}(p)=0$ for $i\neq j$. In the following, we calculate  $\widetilde{Q}(g)$ at $p$. 
First, 
\begin{align*}
I=-hh_0(1-h^2)f\sum_{i=1}^n(|\nabla f|^2-f_i^2)f_{ii}
+n\alpha f^\alpha h|\nabla f|^2+n(1-hh_0).
\end{align*}
Since $f$ is concave by Theorem 3.1 \cite{HanShenWang}, then $f_{ii}\le 0$ and hence
\begin{align}\label{eq-Estimate-I}
I\ge\frac{99}{100}C|x|^\alpha hh_0f\sum_{i=1}^n\sum_{k\neq i}f_k^2|f_{ii}|
+n\alpha f^\alpha h|\nabla f|^2+\frac{99}{100}nC|x|^{\alpha}. 
\end{align}
We now consider terms in $II$. For illustrations, we consider 
the following three terms: 
\begin{align*}II_1&=-2C\alpha|x|^{\alpha-2}f^2h^2h_0[(x\cdot\nabla f)\Delta f-x_if_jf_{ij}],\\
II_2&=-C\alpha(n-1)|x|^{\alpha-2}f^2h^2h_0|\nabla f|^2,\\
II_3&=-C^3\alpha^3(n-1)|x|^{3\alpha-4}f^4h_0.\end{align*}
For $II_1$, we write 
\begin{align*}II_1=
-2C\alpha\left(\frac{f}{|x|}\right)^{1-\alpha}f^{1+\alpha}
h^2h_0\sum_{i=1}^n\sum_{k\neq i}\frac{x_k}{|x|}f_kf_{ii}.\end{align*}
By \eqref{r divide u}, we have 
$$C\left(\frac{f}{|x|}\right)^{1-\alpha}\le 
\left(\frac{1}{r_0^\alpha}+1\right)(C_\mu|x|^{\frac16})^{1-\alpha}\le \frac{1}{100}.$$
Hence, 
\begin{align*}
|II_1| &\leq 
\frac{\alpha}{100}f^{1+\alpha} h^2h_0\sum_{i=1}^n\sum_{k\neq i}f_k^2|f_{ii}|
+\frac{\alpha}{100} f^\alpha  h^2h_0 f\sum_{i=1}^n|f_{ii}|\\
&\leq
\frac{\alpha}{100}f^{\alpha} h^2h_0f\sum_{i=1}^n\sum_{k\neq i}f_k^2|f_{ii}|
+\frac{n\alpha}{100} f^\alpha  h  (1+|\nabla f|^2),
\end{align*}
where we used \eqref{u12=0} with $f_{ii}\le 0$. 
For $II_2$, we write 
$$II_2=-C\alpha(n-1)\left(\frac{f}{|x|}\right)^{2-\alpha}f^\alpha h^2h_0|\nabla f|^2.$$
Then, 
$$|II_2|\le (n-1)\alpha \left(\frac{1}{r_0^\alpha}+1\right) (C_\mu r_0)^{2-\alpha}f^\alpha h^2h_0|\nabla f|^2
\le \frac{\alpha}{100} f^\alpha h |\nabla f|^2.$$
For $II_3$, we write 
$$II_3
=-C^3\alpha^3(n-1)\left(\frac{f}{|x|}\right)^{4-3\alpha}f^{3\alpha}h_0.$$
Then, 
\begin{align*}
|II_3| \le \left(\frac{1}{r_0^\alpha}+1\right)^3\alpha^3(n-1)(C_\mu |x|^{\frac16})^{4-3\alpha}f^{\alpha}h_0
\leq \frac{\alpha}{100}f^{\alpha}h.
\end{align*}
We can consider other terms in $II$ similarly. Therefore, with \eqref{eq-Estimate-I}, we obtain 
$\widetilde Q(g)\ge 0$ at $p\in \Omega_0$. 
Since $p$ is arbitrary, we have \eqref{eq-domain}. 
\end{proof}

\section{Singular Points with Positive Curvatures}\label{sec-C-2,alpha-boundary}

Let $\Omega$ be a bounded convex domain in $\mathbb R^2$ and, for some
$x_0\in\partial\Omega$ and $R>0$, let 
$\partial\Omega\cap B_R(x_0)$  consist of two $C^{2,\alpha}$-curves
$\sigma_1$ and $\sigma_2$
intersecting at the origin at an angle $\mu\pi$, for some
constants $\alpha, \mu\in (0,1)$. Assume the curvature $\kappa_i$ 
of $\sigma_i$ at $x_0$ is positive and denote by $R_i=1/\kappa_i$. Set 
$$\Omega_{x_0,\mu,\kappa_{1},\kappa_{2}}=
B_{R_1} (x_0+R_1\nu_1)\bigcap B_{R_2} (x_0+R_2\nu_2),$$ 
where $\nu_i$ is the unit inner normal vector of $\sigma_i$ at $x_0$. 
Then, any $x\in\Omega_{x_0,\mu,\kappa_{1},\kappa_{2}}$ near $x_0$
is uniquely determined by $d_1, d_2$, where $d_i(x)$ is the distance 
from $x$ to $\partial B_{R_i}(R_i\nu_i)$.
With such a  one-to-one correspondence between $x\in\Omega_{x_0,\mu,\kappa_{1},\kappa_{2}}$ 
near $0$ and $(d_1, d_2)$ with $d_1 > 0$ and
$d_2 > 0$ small, we rewrite the solution of \eqref{eq-MinGmain} 
for $\Omega=\Omega_{x_0,\mu,\kappa_{1},\kappa_{2}}$ as 
\begin{equation}\label{v-d1-d2} f_{x_0,\mu,R_1,R_2}(d_1,d_2).\end{equation}

We prove the following result in this section.

\begin{theorem}\label{thrm-C-2,alpha-expansion}
Let $\Omega$ be a bounded convex domain in $\mathbb R^2$ and, for some
$x_0\in\partial\Omega$ and $R>0$, let 
$\partial\Omega\cap B_R$  consist of two $C^{2,\alpha}$-curves
$\sigma_1$ and $\sigma_2$
intersecting at $x_0$ at an angle $\mu\pi$, for some
constants $\alpha, \mu\in (0,1)$. Assume the curvature $\kappa_i$ 
of $\sigma_i$ at $x_0$ is positive. 
Suppose  $f\in C(\bar\Omega)\cap C^\infty (\Omega)$ is the solution of 
\eqref{eq-MinGmain} in $\Omega$.
Then, for any $x\in \Omega$ close to $x_0$,
\begin{align}
\label{main-estimate2a} \left|f(x)-f_{x_0,\mu,R_1,R_2}(d_1,d_2)\right|\leq Cf(x)|x-x_0|^{\beta},
\end{align}
where $d_i$ is the distance to $\sigma_i$, 
$f_{x_0,\mu,R_{1},R_{2}}$ is the solution of \eqref{eq-MinGmain} in 
$\Omega_{x_0,\mu,\kappa_{1},\kappa_{2}}$ in terms of $d_1$ and $d_2$, 
$\beta$ is a constant in $(0,\alpha/2]$, and $C$ is a positive constant depending only on 
$R$, $\mu$, $\alpha$, and
the $C^{2,\alpha}$-norms of $\sigma_1$ and $\sigma_2$ in $B_R(x_0)$.
\end{theorem}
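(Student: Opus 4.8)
The plan is to compare $f$ near $x_0$ with the model solution $f_{x_0,\mu,R_1,R_2}$ by squeezing $f$ between the solutions on two nearby intersections of balls, one contained in $\Omega$ and one containing it, and then transporting those barrier solutions back to the model domain $\Omega_{x_0,\mu,\kappa_1,\kappa_2}$ by hyperbolic isometries. The first reduction is to invoke Lemma \ref{localization}: replacing $\Omega$ by the convex domain whose boundary near $x_0$ agrees with $\sigma_1,\sigma_2$ and is, say, a line segment away from $B_{R_0}(x_0)$ changes $f$ only by a factor $1+O(|x-x_0|^\tau)$, so it suffices to prove \eqref{main-estimate2a} for that modified domain; in particular we may assume $\sigma_1,\sigma_2$ extend as $C^{2,\alpha}$-curves and control everything by their $C^{2,\alpha}$-norms. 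After that, set $x_0=0$ and recall that $\Omega\subseteq V_0$ (the tangent cone), so $f(x)\le |x|h_\mu(\theta)$; combined with the $C^{\frac13}$-type growth from Corollary \ref{cor-Existence}, this gives the a priori two-sided bounds on $f$ that make ``$f(x)|x|^\beta$'' the right error scale.

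The geometric heart is the construction, for each $x\in\Omega$ close to $0$ with (say) $d(x)\le|x|^{3/2}$, of two comparison domains. Let $p_i$ be the nearest point to $x$ on $\sigma_i$. Because $\sigma_i\in C^{2,\alpha}$ with curvature $\kappa_i=1/R_i>0$ at $0$, for $|x|$ small the osculating circle at $p_i$ has radius $R_i+O(|x|^\alpha)$, and $\sigma_i$ lies, near $p_i$, between the circle of radius $R_i(1-c|x|^\alpha)$ through $p_i$ tangent from inside and the circle of radius $R_i(1+c|x|^\alpha)$ tangent from outside. Taking intersections of the two inner circles and of the two outer circles produces $\widetilde B\subseteq\Omega\subseteq\widehat B$ with $x$ in both, where $\widetilde B,\widehat B$ are intersections of two disks of radii $R_i(1\mp c|x|^\alpha)$ whose ``angle'' at their near corner is $\mu\pi+O(|x|^\alpha)$ (the tangent directions of $\sigma_i$ at $p_i$ differ from those at $0$ by $O(|x|)$, and the distance from $x$ to the corner of $\widetilde B$, resp.\ $\widehat B$, is $|x|(1+O(|x|^{1/2}))$). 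By the maximum principle $\widetilde f\le f\le\widehat f$ on $\widetilde B$, where $\widetilde f,\widehat f$ solve \eqref{eq-MinGmain} on $\widetilde B,\widehat B$.

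It remains to compare $\widetilde f(x)$ and $\widehat f(x)$ with $f_{x_0,\mu,R_1,R_2}(d_1,d_2)$. Here $\widetilde B$, $\widehat B$, and $\Omega_{x_0,\mu,\kappa_1,\kappa_2}$ are all intersections of two disks, so they differ only by the radii ($R_i$ versus $R_i(1\pm c|x|^\alpha)$) and by the angle ($\mu\pi$ versus $\mu\pi+O(|x|^\alpha)$); a hyperbolic isometry of the type $T_L$ in \eqref{eq-T} sends each such intersection-of-two-disks to a cone $V_\nu$, reducing the comparison to: (i) the dependence of $f_{V_\nu}=rh_\nu(\theta)$ on the cone angle $\nu$, which is exactly Lemma \ref{lem-sol-diff-cone} and gives a factor $1+O(|x|^\alpha)$ since $|\nu-\mu|=O(|x|^\alpha)$; and (ii) the dependence on the overall scale $R_i$, which is a clean scaling together with tracking the Jacobian of $T_L$ at the relevant point (this produces the ``$d_i$'' coordinates: $d_i$ is the distance to $\partial B_{R_i}(R_i\nu_i)$, and under $T_L$ it corresponds to the appropriate boundary-distance in the cone). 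Matching the corner of $\widetilde B$ (resp. $\widehat B$) to that of $\Omega_{x_0,\mu,\kappa_1,\kappa_2}$ and using $\widetilde f_\mu(y)=|y|h_\mu(\theta)(1+O(\cdot))$ as in the proof of Lemma \ref{localization}, one gets $\widetilde f(x),\widehat f(x)=f_{x_0,\mu,R_1,R_2}(d_1,d_2)\,(1+O(|x|^\beta))$ with $\beta=\min\{\alpha,1/2,\dots\}/2$, and then $|f(x)-f_{x_0,\mu,R_1,R_2}(d_1,d_2)|\le Cf(x)|x|^\beta$ in the region $d\le|x|^{3/2}$. The complementary region $d\ge|x|^{3/2}$ is handled exactly as Case 1 of Lemma \ref{localization} (away from the corner both $f$ and the model are well-approximated by the cone solution $|x|h_\mu(\theta)$, with errors $O(|x|^{1/2})$), and combining the two regions gives \eqref{main-estimate2a}.

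\medskip
\noindent\textbf{Main obstacle.} The delicate point is uniform control of the error in step (ii)–(iii): one must show that replacing the true curved boundary pieces $\sigma_i$ by circular arcs of radius $R_i$ costs only $|x|^\alpha$ (not $|x|^{\alpha/2}$ or worse) in the \emph{ratio} of solutions, and that the transported corner points and the distance functions $d_i$ match up to that precision. This requires (a) a quantitative $C^{2,\alpha}$-osculation estimate for $\sigma_i$ near $p_i$ rather than near $0$, (b) control of how $|\nabla f_{x_0,\mu,R_1,R_2}|$ (hence the variation of the solution in the $d_i$-variables) behaves near the corner, via Corollary \ref{cor-Existence}, and (c) keeping the constant in Lemma \ref{lem-sol-diff-cone} — which depends on $\sup g_\mu^4$ and on $b_\mu$ — uniformly bounded as the angle varies over an $O(|x|^\alpha)$-neighborhood of $\mu$, which is fine since $\mu$ stays in a compact subinterval of $(0,1)$. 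Putting the contributions together and checking that the exponent does not degrade below $\alpha/2$ is the bookkeeping that the actual proof must carry out.
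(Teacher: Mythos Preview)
Your overall architecture matches the paper's --- split into $d\ge|x-x_0|^{3/2}$ and $d\le|x-x_0|^{3/2}$, build inner and outer intersections-of-disks tangent at the nearest points $p_i\in\sigma_i$, transport to cones via $T_L$ --- but there is a genuine gap in the containment step.

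You assert that with radii $R_i(1\mp c|x|^\alpha)$ one has $\widetilde B\subseteq\Omega\subseteq\widehat B$ globally, and then invoke the maximum principle to get $\widetilde f\le f\le\widehat f$. This fails. The $C^{2,\alpha}$-osculation only forces $\sigma_i$ to lie between the two circles through $p_i$ over an arc of length $s$ with $s^{2+\alpha}\lesssim |x|^\alpha s^2$, i.e.\ $s\lesssim|x|$; beyond that scale the curvature of $\sigma_i$ has drifted by more than $c|x|^\alpha$ and the inclusion breaks down. So the inclusions hold only on $B_{C|x|}(x_0)$, and the maximum principle does not apply as written. If you try to repair this by invoking Lemma~\ref{localization} at scale $r_0\sim|x|$, the factor $(|x-x_0|/r_0)^\tau$ in \eqref{u1-u2} is $O(1)$ and gives nothing. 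The paper's fix has two ingredients. First, it perturbs the radii by $|x-x_0|^{\alpha/2}$ rather than $|x-x_0|^{\alpha}$; this buys containment on the larger ball $B_{C_0|x-x_0|^{1/2}}(x_0)$ (now $s^\alpha\lesssim|x|^{\alpha/2}$ allows $s\lesssim|x|^{1/2}$). Second --- and this is the step your outline omits --- it applies Lemma~\ref{localization} \emph{again}, at scale $r_0=C_0|x-x_0|^{1/2}$, to pass from $f,\widehat f$ to the solutions $f',\widehat f'$ on the truncated domains $\Omega\cap B_{r_0}$ and $\widehat\Omega\cap B_{r_0}$, incurring an error $(|x|/r_0)^\tau=|x|^{\tau/2}$. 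Only then does the maximum principle yield the sandwich \eqref{eq-comparison}. This mechanism is exactly why the final exponent is $\beta\le\alpha/2$ rather than $\alpha$; your ``Main obstacle'' paragraph anticipates the opposite, hoping for $|x|^\alpha$, which is too optimistic.

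A smaller point: for comparing $\widetilde f(x)$, $\widehat f(x)$ with $f_{x_0,\mu,R_1,R_2}(d_1,d_2)$ the paper does not use Lemma~\ref{lem-sol-diff-cone} on the cone solutions directly but rather Lemma~\ref{lemma-mu} on the coefficient $a_\mu$ from \eqref{u exp}. After pushing through $T_{\widetilde L}$ it derives an implicit cubic relation expressing $d_1$ in terms of $\widetilde f(x)$ and $|x-x_0|$, and the analogous relation in $\Omega_{x_0,\mu,\kappa_1,\kappa_2}$; matching those two relations is what produces the error $O(|x-x_0|^\gamma)$ with $\gamma=\min\{\alpha/2,1/6\}$. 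Your sketch of that step is in the right spirit but would need this quantitative form.
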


\begin{proof}
Without loss of generality, we assume $d_1\leq d_2$. Then, 
$$d_2\leq|x-x_0|\leq Cd_2.$$
We consider two cases. 

{\it Case 1}. We first consider the case 
$d_1\geq |x-x_0|^{\frac{3}{2}}$. 
By Case 1 in the proof of Lemma \ref{localization}, specifically \eqref{eq-Estimate-Both}, 
we have
\begin{align}\label{eq-Estimate}
    f(x)=f_\mu(x)(1+O(|x-x_0|^{\frac{1}{2}})), 
\end{align}
where $f_\mu(x)$ is the corresponding solution of \eqref{eq-MinGmain} 
on the tangent cone $V_{x_0}$ of $\Omega$ at $x_0.$
Let $x^*$ be the unique point in $\Omega_{x_0, \mu, \kappa_1, \kappa_2}$ 
determined by $d_1,d_2$.  Then, 
\begin{align*}
   |x^*-x|\leq C|x-x_0|^{2+\alpha}.
\end{align*}
Note that $V_{x_0}$ is also the tangent cone of 
$\Omega_{x_0, \mu, \kappa_1, \kappa_2}$ at $x_0$. Hence, 
\begin{align*}
    f_{x_0,\mu,R_1,R_2}(d_1,d_2)=f_\mu(x^*)(1+O(|x^*-x_0|^{\frac{1}{2}})).
\end{align*}
By the mean value theorem and $\theta\ge |x-x_0|^{\frac12}$, we get 
\begin{align*}
    |f_\mu(x)-f_\mu(x^*)|\leq C|x-x_0|^{-\frac{1}{3}}|x-x_0|^{2+\alpha}
    \leq Cf_\mu(x)|x-x_0|^{\frac{1}{2}+\alpha}.
\end{align*}
Therefore,
\begin{align*}
    f(x)=f_{x_0,\mu,R_1,R_2}(d_1,d_2)(1+O(|x-x_0|^{\frac{1}{2}})).
\end{align*}

{\it Case 2.} We consider 
$d_1\leq |x-x_0|^{\frac{3}{2}}$.
Denote by $p_i$ the point on $\sigma_i$ closest to $x$ and by 
$\nu_{p_i}$ the unit inner normal vector to $\sigma_i$ at $p_i.$
Set, for $i=1,2,$
\begin{align*}
    \widetilde{R}_i=R_i+|x-x_0|^{\frac{\alpha}{2}},\quad
   \widehat{ R}_i=R_i-|x-x_0|^{\frac{\alpha}{2}},
\end{align*}
and
\begin{align*}
    \widetilde{\Omega}=\bigcap_{i=1}^2 B_{\widetilde{R}_i}(p_i+\widetilde{R}_i\nu_{p_i}),\quad
     \widehat{\Omega}=\bigcap_{i=1}^2 B_{\widehat{ R}_i}(p_i+\widehat{ R}_i\nu_{p_i}).
\end{align*}
Let $\widetilde{f},\widehat{f}$ be the solution of \eqref{eq-MinGmain} 
for $\Omega=\widetilde{\Omega},\widehat{\Omega},$ respectively.

For $|x-x_0|$ small, it is straightforward to verify
\begin{align*}
    \Omega\bigcap B_{C_0|x-x_0|^{\frac{1}{2}}}(x_0)\subset \widetilde{\Omega},\quad 
    \widehat{\Omega}\bigcap B_{C_0|x-x_0|^{\frac{1}{2}}}(x_0)\subset \Omega.
\end{align*}
If $|x-x_0|$ is small, $\partial B_{\widetilde{R}_1}(p_1+\widetilde{R}_1\nu_{p_1})
$ and $\partial B_{\widetilde{R}_2}(p_2+\widetilde{R}_2\nu_{p_2})$ intersect 
at points $\widetilde{p},\widetilde{q}.$ 
Without loss of generality, we denote by $\widetilde{p}$ the point closer to $x_0$.
Similarly, $\partial B_{\widehat{R}_1}(p_1+\widehat{R}_1\nu_{p_1})
$ and $\partial B_{\widehat{R}_2}(p_2+\widehat{R}_1\nu_{p_2})$ intersect at points
$\widehat{p},\widehat{q}.$ We denote by  $\widehat{p}$ the point closer to $x_0$.
It is easy to verify
\begin{align*}
    |\widetilde{p}-x_0|\leq C|x-x_0|^2,\quad
    |\widehat{p}-x_0|\leq C|x-x_0|^2.
\end{align*}
Hence,\begin{align*}
    \widehat{\Omega}\bigcap B_{C_0'|x-x_0|^{\frac{1}{2}}}(\widehat{p})\subset \Omega.
\end{align*}
Let $f',\widehat{f}'$ be the solution of \eqref{eq-MinGmain} for 
$\Omega\bigcap B_{C_0|x-x_0|^{\frac{1}{2}}}(x_0)$, 
$\widehat{\Omega}\bigcap B_{C_0'|x-x_0|^{\frac{1}{2}}}(\widehat{p}),$ respectively.
By Lemma \ref{localization},
we have 
$$|f'(x)-f(x)|\leq 
Cf(x)\left(\frac{|x-x_0|}{C_0|x-x_0|^{\frac{1}{2}}}\right)^{\tau}\leq Cf(x)|x-x_0|^{\frac{\tau}{2}},$$
and
$$|\widehat{f}'(x)-\widehat{f}(x)|\leq
\widehat{f}(x)\left(\frac{|x-\widehat{p}|}{C_0|x- x_0|^{\frac{1}{2}}}\right)^{\tau}
\leq C\widehat{f}(x)|x-x_0|^{\frac{\tau}{2}},$$
where we took $r_0=C_0|x-x_0|^{\frac12}$ in \eqref{u1-u2}. 
By the maximum principle, we have
$$f'(x)\leq\widetilde{f}(x),\quad \widehat{f}'(x)\leq f(x).$$
Hence, 
\begin{align*}
 \widehat{f}(x)(1-C|x-x_0|^{\frac{\tau}{2}})&\leq f(x),\\
  f(x)(1-C|x-x_0|^{\frac{\tau}{2}})&\leq \widetilde{f}(x).
\end{align*}
Therefore,
\begin{align}\label{eq-comparison}
   \widehat{f}(x)(1-C|x-x_0|^{\frac{\tau}{2}})\leq f(x)\leq \widetilde{f}(x)
   (1+C|x-x_0|^{\frac{\tau}{2}}).
\end{align}

Let $\widetilde{\mu}\pi$ be the openning angle of the tangent cone of 
$\widetilde{\Omega}$ at $\widetilde{p}$ and 
$\widehat{\mu}\pi$ be the openning angle of the tangent cone of 
$\widehat{\Omega}$ at $\widehat{p}.$ It is easy to check that
\begin{align}\label{eq-Angles}
    \begin{split}
      |\widetilde{\mu}-\mu|  \leq C|x-x_0|^{1+\frac{\alpha}{2}},\quad
          |\widehat{\mu}-\mu|  \leq C|x-x_0|^{1+\frac{\alpha}{2}}.
     \end{split}
\end{align}
We note that $\partial B_{R_1}(x_0+R_1\nu_1)$ and $\partial B_{R_2}(x_0+R_2\nu_2)$
intersect at two points, one of which is $x_0$ and another denoted by  $q.$
By Lemma 6.1 in \cite{HanShen2} or calculating directly, we have
$$|x_0q|=\frac{2R_1R_2\sin(\pi-\mu\pi)}{\sqrt{R_{1}^{2}+R_{2}^{2}-2R_1R_2\cos(\pi-\mu\pi)}},$$
and similar formulas for $|\widehat{p}\widehat{q}|$ and $|\widetilde{p}\widetilde{q}|$. 
Hence, 
\begin{align*}
    ||\widehat{p}\widehat{q}|-|x_0q||& \leq C|x-x_0|^{\frac{\alpha}{2}},\\
    ||\widetilde{p}\widetilde{q}|-|x_0q||& \leq C|x-x_0|^{\frac{\alpha}{2}}.
\end{align*}
We also note that 
in $\widetilde \Omega$, the distance of $x$ to $\partial B_{\widetilde{R}_i}(x_0+\widetilde R_{i}\nu_{p_i})$ 
is $d_i$ for $i=1,2$ and that 
in $\widehat \Omega$, the distance of $x$ to $\partial B_{\widehat{R}_i}(x_0+\widehat R_{i}\nu_{p_i})$ 
is $d_i$ for $i=1,2$. 
Hence, $$\widetilde{f}(x)=f_{x_0,\widetilde{\mu},\widetilde{R}_1,\widetilde{R}_2}(d_1,d_2), 
\quad 
\widehat{f}(x)=f_{x_0,\widehat{\mu},\widehat{R}_1,\widehat{R}_2}(d_1,d_2).$$
Next, we will prove, for some constant $\gamma,$
\begin{align}\label{eq-Claim-tilde}
\widetilde f(x)= f_{x_0, \mu,R_1,R_2}(d_1,d_2)
 (1+O(|x-x_0|^\gamma)),
\end{align}
and 
\begin{align}\label{eq-Claim-hat}
\widehat f(x)= f_{x_0,\mu,R_1,R_2}(d_1,d_2)
 (1+O(|x-x_0|^\gamma)).
\end{align}
We have the desired result by combining \eqref{eq-comparison}, \eqref{eq-Claim-tilde}, 
and \eqref{eq-Claim-hat}. 

Set $\widetilde L=|\widetilde{p}\widetilde{q}|/2$. 
By a translation and a rotation, we assume
\begin{align*}
    \widetilde{p}=(-\widetilde L,0),\quad
    \widetilde{q}=(\widetilde L,0).
\end{align*}
Then, $T_{\widetilde L}|_{\{x_3=0\}}$ 
transforms $\widetilde{\Omega}$ conformally to an infinite cone $V_{\widetilde{\mu}}$ 
and $T_{\widetilde L}$
transforms the minimal graph 
$\{(\widetilde{x}_1,\widetilde{f}(\widetilde{x}))\}$ 
with the asymptotic boundary $\partial\widetilde\Omega$
to the minimal graph 
$\{(\widetilde{y},f_{\widetilde{\mu}}(\widetilde{y}))\}$ 
with the asymptotic boundary $\partial V_{\widetilde{\mu}}.$ 
With $x=(x_1,x_2)$, set $y=(y_1, y_2)$ such that 
$(y,f_{\widetilde{\mu}}(y))=T_{\widetilde L}(x,\widetilde{f}(x))$. 
For brevity, set 
$$T_{\widetilde L, 0}=T_{\widetilde L}|_{\{x_3=0\}}.$$
We have
$$\widetilde{ f}(x)\leq C|x-x_0|\left(\frac{|x-x_0|^{\frac{3}{2}}}{|x-x_0|}\right)^{\frac{1}{3}}\\
   \leq C|x-x_0|^{\frac{7}{6}}.$$
Moreover, 
\begin{align*}
    JT_{\widetilde L}|_{(\widetilde p, 0)}=\frac{1}{2}I_{3\times3},
\end{align*}
and 
\begin{align*}
   \widetilde{f}_\mu(y)
   &=\frac{2\widetilde L^2\widetilde{f}(x)}{(
   x_1-\widetilde L)^2+x_2^2+\widetilde{f}^2(x)}
   =\left(\frac{1}{2}+O(|x-x_0|)\right)\widetilde{f}(x),\\
    (y_1,y_2)&=\frac{(
   x_1-\widetilde L)^2+x_2^2}{(
   x_1-\widetilde L)^2+x_2^2+\widetilde{f}^2(x)}
   T_{\widetilde L,0}(x_1,x_2)
   -\frac{\widetilde L(\widetilde{f}^2(x),0)}{(
   x_1-\widetilde L)^2+x_2^2+\widetilde{f}^2(x)}.
\end{align*}
Set 
$$\widetilde{l}_1=T_{\widetilde L,0}
   (\partial B_{\widetilde{R}_1}(p_1+\widetilde{R}_1\nu_{p_1})),$$
and write, for a unique vector $e_{\widetilde{l}_1}$, 
$$\partial V_{\widetilde{\mu}}\bigcap\widetilde{l}_1=\{te_{\widetilde{l}_1}|\ t\geq 0\}.$$
By the definition of $d_1$, we have 
\begin{align*}
    \mathrm{dist}(x,\partial B_{\widetilde{R}_1}(p_1+\widetilde{R}_1\nu_{p_1}))=
d_1.
\end{align*}
Hence, 
\begin{align*}
    \mathrm{dist}(T_{\widetilde L,0}(x),
   \widetilde{l}_1)=
   \left(\frac{1}{2}+O(|x-x_0|)\right)d_1.
\end{align*}
A simple geometric argument yields 
\begin{align*}
    \mathrm{dist}(y,\widetilde{l}_1)&=(1+O(|x-x_0|))\mathrm{dist}(
    T_{\widetilde L,0}(x),
   \widetilde{ l}_1)\\ 
   &\qquad-\frac{\widetilde L\widetilde{f}^2}{(
   x_1-\widetilde L)^2+x_2^2+\widetilde{f}^2}
   \sin\angle(\frac{\overrightarrow{\widetilde{p}\widetilde{q}}}{|\widetilde{p}\widetilde{q}|},
   e_{\widetilde{l}_1} )\\&=
  \left (\frac{1}{2}+O(|x-x_0|)\right)d_1
   -(1+O(|x-x_0|))\frac{\widetilde{f}^2}{4\widetilde{R}_1},
\end{align*}
or 
$$(1+O(|x-x_0|))d_1=(2+O(|x-x_0|))\frac{\widetilde{f}^2}{4\widetilde{R}_1}
+2\mathrm{dist}(y,\widetilde{l}_1).$$
Write $\langle y,e_{\widetilde{l}_1}\rangle=|y|\cos\theta$. By \eqref{u exp}, we have
\begin{align*}
\frac{
  \mathrm{dist}(y,\widetilde{l}_1)}{\langle y,e_{\widetilde{l}_1}\rangle}
=a_{\widetilde{\mu}}
\bigg(\frac{f_{\widetilde{\mu}}}{\langle y,e_{\widetilde{l}_1}\rangle}\bigg)^3
  +O\bigg(\bigg(\frac{
  f_{\widetilde{\mu}}}{\langle y,e_{\widetilde{l}_1}\rangle}\bigg)^4\bigg).
\end{align*}
We point out that the left-hand side is simply 
$\tan \theta$. The presence of the factor $\langle y,e_{\widetilde{l}_1}\rangle$ in the right-hand side 
is due to a scaling since \eqref{u exp} is expanded at $(r,\theta)=(1,0)$. 
Note that
\begin{align*}
\langle y,e_{\widetilde{l}_1}\rangle=|y|\cos\theta
= (1+O(|x-x_0|)) |y|
=\left(\frac{1}{2}+O(|x-x_0|)\right)|x-x_0|.
\end{align*}
Hence,
\begin{align*}
    (1+O(|x-x_0|))d_1&=\frac{1}{4\widetilde{R}_1}(2+O(|x-x_0|))\widetilde{f}^2\\
    &\qquad+[a_{\widetilde{\mu}}+O(|x-x_0|^{\frac{1}{6}})](1+O(|x-x_0|))\frac{\widetilde{f}^3}{|x-x_0|^2},
\end{align*}
where we substituted $f_{\widetilde\mu}$ by $\widetilde f$. 
By Lemma \ref{lemma-mu}, we have
\begin{align*}
    |a_{\widetilde{\mu}}-a_{\mu}|\leq C|x-x_0|^{1+\frac{\alpha}{2}},
\end{align*}
and hence
\begin{align*}
    (1+O(|x-x_0|))d_1&=\frac{1}{4R_1}(2+O(|x-x_0|)) (1+O(|x-x_0|)^{\frac{\alpha}{2}})\widetilde{f}^2(x)
   \\
    &\qquad+
    [a_{\mu}+O(|x-x_0|^{\frac{1}{6}})]\frac{\widetilde{f}^3(x)}{|x-x_0|^2},
\end{align*}
where we used the relation between $R_1$ and $\widetilde{R}_1.$
A similar argument holds for $f_{x_0,\mu,R_1,R_2}$ as defined in 
$\Omega=\Omega_{x_0,\mu,\kappa_{1},\kappa_{2}}$. Then,
\begin{align*}
    (1+O(|x-x_0|))d_1&=\frac{1}{4R_1}(2+O(|x-x_0|))f_{x_0,\mu,R_1,R_2}(d_1,d_2)^2\\
    &\qquad+
    [a_{\mu}+O(|x-x_0|^{\frac{1}{6}})]\frac{f_{x_0,\mu,R_1,R_2}(d_1,d_2)^3}{|x-x_0|^2}.
\end{align*}
Hence,
\begin{align*}
    \widetilde{f}(x)=f_{x_0,\mu,R_1,R_2}(d_1,d_2)(1+O(|x-x_0|^{\gamma})),
\end{align*}
where $\gamma= \min\{\frac{\alpha}{2},\frac{1}{6}\}.$ This implies \eqref{eq-Claim-tilde}. 
We can prove \eqref{eq-Claim-hat} similarly.
\end{proof}

\begin{remark}\label{rem-Optimal} If $\alpha\le \min\{\tau, 1/3\}$, then we can take $\beta=\alpha/2$. 
See \eqref{eq-comparison}, \eqref{eq-Claim-tilde}, 
and \eqref{eq-Claim-hat}. 
\end{remark}

We are ready to prove Theorem \ref{main theorem2}.

\begin{proof}[Proof of Theorem \ref{main theorem2}]
We adopt the notations from Theorem \ref{thrm-C-2,alpha-expansion} and its proof.
Without loss of generality, we assume $x_0$ is
the origin. 
For any $x$ sufficiently small, we define
$$T_{\{\sigma_i\}}x=(d_1(x), d_2(x)),$$ where $d_i(x)$ is the signed distance 
from $x$ to $\sigma_i$ with respect to $\nu_i$, $i=1, 2$, positive if $x$ is on the side of 
$\nu_i$ and negative if on another side. Refer to \cite{HanShen2} for details.
We emphasize that $T_{\{\sigma_i\}}$ is defined in a full neighborhood of the origin 
instead of only in $\Omega$
and that the signed distance is used instead of its absolute value. 
Then, $T_{\{\sigma_i\}}$ is $C^{2,\alpha}$ near the origin and
its Jacobi matrix at the origin is nonsingular by the linear independence of
$\nu_1$ and $\nu_2$. Therefore, $T_{\{\sigma_i\}}$ is a $C^{2,\alpha}$-diffeormorphism
in a neighborhood of the origin. We have a similar result for 
$T_{\{\partial B_{R_i} (R_i\nu_i)\}}$, with $\partial B_{R_i} (R_i\nu_i)$ replacing
$\sigma_i$, $i=1,2$.
Then, the map $T=T^{-1}_{\{\partial B_{R_i} (R_i\nu_i)\}}\circ T_{\{\sigma_i\}}$ 
is a $C^{2,\alpha}$-diffeomorphism
near the origin and has the property that the signed distance from $x$ to $\sigma_i$ is the same as that from $Tx$
to $\partial B_{R_i} (R_i\nu_i)$, for $i=1, 2$.
Therefore, Theorem \ref{main theorem2} follows from Theorem \ref{thrm-C-2,alpha-expansion}. 
\end{proof}

\end{document}